\newtheorem{thm}{Theorem}
\newtheorem{lemma}[thm]{Lemma}
\newtheorem{definition}{Definition}[section]
\newtheorem{condition}{Condition}[section]
\newtheorem{assmpn}{Assumption}[section]
\newtheorem{rem}{Remark}[section]
\newtheorem{ex}{Example}[section]
\newtheorem{cor}[thm]{Corollary}
\def\ep{\epsilon}
\def\R{{\mathbb{R}}}
\def\xept{X_{\ep,t}}
\def\yept{Y_{\ep,t}}
\def\s{\sigma}
\def\l{\left}
\def\r{\right}
\def\p{\partial}
\def\L{\mathcal L}
\def\ol{\overline}
\def\ul{\underline}
\def\Q{\mathcal Q}
\def\ou{\overline{u}}
\def\uu{\underline{u}}
\def\xep{\bar{x}_\ep}
\def\yep{\bar{y}_\ep}
\def\sep{\bar{s}_\ep}
\def\tep{\bar{t}_\ep}
\begin{document}

\title{Large deviations for multi-scale jump-diffusion processes}

\author{Rohini Kumar\thanks{Work partially supported by National Science Foundation grant DMS 1209363 }\\Department of Mathematics\\Wayne State University \\.\\and\\ .\\Lea Popovic\\Department of Mathematics and Statistics\\ Concordia University}

\date{\today}
\maketitle
\begin{abstract}
\noindent We obtain large deviation results for a two time-scale model of jump-diffusion processes. The processes on the two time scales are fully inter-dependent, the slow process has small perturbative noise and the fast process is ergodic. Our results extend previous large deviation results for diffusions. We provide concrete examples in their applications to finance and biology, with an explicit calculation of the large deviation rate function.
\end{abstract}

\tableofcontents

\medskip 

\noindent \textit{Running head.} Large deviations for multi-scale jump-diffusions.\\
\textit{MSC Subject Classification (2010).} 60F10, 60J75, 49L25\\
\textit{Key words and phrases.} large deviation principle, multi-scale asymptotics, jump diffusions\\

\section{Introduction}

For a number of processes in finance and biology the appropriate stochastic modelling is done in terms of multi-scale Markov processes with fully dependent slow and fast fluctuating variables. The most common examples of such multi-scale processes (random evolutions, diffusions, state dependent Markov chains) are all particular cases of jump-diffusions. The law of large numbers limit, central limit theorem, and the corresponding large deviations behaviour of these models are all of interest in applications.

One case of their use in finance is in multi-factor stochastic volatility models, which are used to capture the smiles and skews of implied volatility. The separation of time scales is helpful for calibration, since it allows one to reduce the number of group parameters. The rate function from the large deviation principle for the stock price process can be used to obtain the price of short maturity options, as well as the limit of the at-the-money implied volatility.  These have been explicitly calculated for models in which the logarithm of the stock price and the stochastic volatility are driven by diffusions (\cite{FFK12}, \cite{FFF}). However, much of the empirical evidence (\cite{B-NS01}, \cite{Kou02}) suggests that mean-reverting jump-diffusions would be a more appropriate model for the problem.

In biology one case of their use is in models of intracellular biochemical reactions. Due to low copy numbers of various key molecular types and varying strengths in chemical bonds, normalized copy numbers of different types of molecules  are processes on multiple time-scales (see \cite{BKRP05}, \cite{KK13} for references to the biology literature). Changes in molecular compositions are modelled by state-dependent Markov chains, and on the slower time scale are well approximated by diffusions with small noise or piecewise deterministic Markov chains  (\cite{KKP12}).  The rate function from the large deviation principle for slowly fluctuating molecular species is used to calculate the propensity for switching in a network that has multiple stable equilibria. Since intracellular processes are also subject to other sources of `extrinsic' noise, multiple time-scale diffusions may include jumps from additional sources. For example, there can be errors during cell division (\cite{Pau10}, \cite{Pau11}); a stochastic model combining both reactions and cell division was analyzed in \cite{McSP14}.


Large deviation results for multi-scale diffusions have been studied by Freidlin (see \cite{FW98} Chapter 7), Veretennikov (\cite{Ver00}), Dupuis et al. (\cite{Dup12}), and Puhalskii (\cite{Puh15}). For the multi-scale Markov chains where the slow process is a piecewise deterministic Markov processes and the fast process is a Markov chain on a finite state space explicit results were obtained by Faggionato et al. (\cite{Fagg09, Fagg10}). For jump-diffusions there are very few  large deviation results. On a single time scale, there are results  by Imkeller et al. (\cite{Imk09}) for first exit times for SDEs driven by symmetric stable and exponentially light-tail symmetric L\'evy processes. An approach based on control theory and the variational representation was developed by Budhiraja et al. in \cite{Bud11} and extended to infinite dimensional versions \cite{Bud13} (that is, SPDEs rather than SDEs driven by a Poisson random measure). It is not easy to see how to use these results in a multi-scale model of jump-diffusions. A special case of a multi-scale process where the slow process is a diffusion and the fast process is a mean-reverting process driven by a Levy process was studied by Bardi et al. (\cite{BCS}), and the authors use PDE methods to prove asymptotics of an optimal control problem. 

A general method for Markov processes based on non-linear semigroups and viscosity methods was developed by Feng and Kurtz in \cite{FK06}. However, verifying the abstract conditions needed to apply this method to multi-scale jump-diffusions  is a non-trivial task.
In this paper we give a proof of large deviations for two time-scale jump-diffusions, using a technique developed by Feng et al. in \cite{FFK12}. The advantage of this method is that it is constructive and, with some effort, can be tailored to different multi-scale processes. 
Our proof follows the steps of \cite{FFK12}, extending it to processes with jumps and full dependence of the slow and fast components. It is based on viscosity solutions to the Cauchy problem for a sequence of partial integro-differential equations and uses a construction of the sub- and super-solutions to related Cauchy problems as in \cite{FFK12}.  
Our results hold for slow and fast jump-diffusions which are fully inter-dependent, and where the fast processes is ergodic but not necessarily symmetric. In case the evolution of both processes is spatially homogeneous in the slow variables, we can also provide a more explicit (than a solution to a variational problem) formula for the rate function.
 
\section{Two time-scale jump-diffusion}

\noindent Consider the following system of stochastic differential equations:
\begin{subequations}\label{SDEs}
\begin{align}
d\xept=&b(X_{\ep,t-},Y_{\ep,t-})dt+\ep b_0(X_{\ep,t-},Y_{\ep,t-})dt+ \sqrt{\ep}\s(X_{\ep,t-},Y_{\ep,t-})dW_t^{(1)}\nonumber\\
&+\ep\int k(X_{\ep,t-},Y_{\ep,t-},z) \tilde{N}^{\frac1\ep. (1)}(dz,dt),\label{X}\\
d\yept=&\frac{1}{\ep}b_1(X_{\ep,t-},Y_{\ep,t-})dt+\frac{1}{\sqrt{\ep}}\s_1(X_{\ep,t-},Y_{\ep,t-})\l(\rho dW_t^{(1)}+\sqrt{1-\rho^2}dW_t^{(2)}\r)\nonumber\\
&+\int k_1(X_{\ep,t-},Y_{\ep,t-},z)\tilde N^{\frac 1\ep. (2)}(dz,dt),\label{Y}\\
X_{\ep,0}=&\ x_0, \ Y_{\ep,0}=\ y_0,\nonumber
\end{align}
\end{subequations}
 where $N^{\frac1\ep.(1)}(\cdot,\cdot), N^{\frac1\ep.(2)}(\cdot,\cdot)$ are independent Poisson random measures with intensity measures $\nu_1(dz)\times\frac{1}{\ep}dt, \nu_2(dz)\times\frac{1}{\ep}dt$; the L\'evy measures  $\nu_1$ and $\nu_2$ satisfy  $ \int_\R (1\wedge z^2)\nu_2(dz)<\infty$ and  $ \int_\R (1\wedge z^2)\nu_2(dz)<\infty$; the centered versions are defined as $$\tilde N^{\frac1\ep.(1)}(\cdot,\cdot)=N^{\frac1\ep.(1)}(\cdot,\cdot)-\nu_1(dz)\times\frac{1}{\ep}dt, \quad\tilde N^{\frac1\ep.(2)}(\cdot,\cdot)=N^{\frac1\ep.(2)}(\cdot,\cdot)-\nu_2(dz)\times\frac{1}{\ep}dt$$ 
and  $W^{(1)},W^{(2)}$ are independent Brownian motions independent of $N^{\frac1\ep.(1)}(\cdot,\cdot), N^{\frac1\ep.(2)}(\cdot,\cdot)$.  

 \medskip
 
To ensure existence and uniqueness of solutions to the system \eqref{SDEs} we assume
\begin{assmpn}[Lipschitz condition]\label{assmpn:Lip} There exists $K_1>0$ such that $\forall$$(x_1,y_1), (x_2,y_2)\in \mathbb R^2$ 
\begin{equation}\label{Lipschitz}\begin{split}
&|b(x_2,y_2)-b(x_1,y_1)|^2+|b_0(x_2,y_2)-b_0(x_1,y_1)|^2+|b_1(x_2,y_2)-b_1(x_1,y_1)|^2\\
&+|\sigma(x_2,y_2)-\sigma(x_1,y_1)|^2+|\sigma_1(x_2,y_2)-\sigma_1(x_1,y_1)|^2\\
&+\int |k(x_2,y_2,z)-k(x_1,y_1,z)|^2\nu_1(z)dz+\int |k_1(x_2,y_2,z)-k_1(x_1,y_1,z)|^2\nu_2(z)dz\\&\leq K_1(|x_2-x_1|^2+|y_2-y_1|^2).\end{split}\end{equation}
\end{assmpn}

\begin{assmpn}[Growth condition]\label{assmpn:growth}
 There exists $K_2>0$ such that $\forall$$(x,y)\in \mathbb R^2$ 
\begin{equation}\label{growth}\begin{split}
&|b(x,y)|^2+|b_0(x,y)|^2+
|b_1(x,y)|^2+|\sigma(x,y)|^2+
|\sigma_1(x,y)|^2\\&+\int |k_1(x,y,z)|^2\nu_2(z)dz
+\int |k(x,y,z)|^2\nu_1(z)dz
\leq K_2(1+x^2+y^2).
\end{split}
\end{equation}
Define \begin{equation}\label{V}
V(y;x,p):=b(x,y)p+\frac1 2\sigma^2(x,y)p^2 +\int\l(e^{pk(x,y,z)}-1-pk(x,y,z)\r)\nu_1(z)dz.
\end{equation}
For each $x$ and $p$ in $\R$ there exists $K_{x,p}>-\infty$ such that 
\begin{equation}\label{V_bound}
V(y;x,p)\geq K_{x,p} \quad \forall y\in \mathbb R.
\end{equation}
\end{assmpn}

\noindent If existence and uniqueness of solutions to \eqref{X}+\eqref{Y} can be established by other means, we will only assume the growth condition i.e.\ Assumption \ref{assmpn:growth}, that the coefficients are continuous, and the lower bound \eqref{V_bound} on $V$.

\medskip
\noindent The infinitesimal generator of $(X_\ep,Y_\ep)$ is for $f\in C_b^2(\R\times\R)$ defined by 
\begin{equation}\label{generator}\begin{split}
\L_\ep f(x,y)=&\; b(x,y)\p_xf(x,y)+\rho\s(x,y)\s_1(x,y)\p^2_{xy}f(x,y)\\
&+\ep b_0(x,y)\p_xf(x,y)+\frac{\ep}{2}\s^2(x,y)\p^2_{xx}f(x,y)\\
&+\frac{1}{\ep}\int \l(f(x+\ep k(x,y,z),y)-f(x,y )-\ep k(x,y,z)\p_xf(x,y) \r)\nu_1(z)dz\\
&+\frac{1}{\ep}\Bigl[b_1(x,y)\p_yf(x,y)+\frac{1}{2}\s_1^2(x,y)\p^2_{yy}f(x,y)\\
&+\int \l(f(x,y+k_1(x,y,z))-f(x,y )-k_1(x,y,z)\p_yf(x,y) \r)\nu_2(z)dz\Bigr].
\end{split}
\end{equation}

\noindent Fix $x\in\R$ and let $Y^x$ denote the process satisfying the SDE
\begin{equation}\label{Y^x}\begin{split}
dY_t=&b_1(x,Y_{t-})dt+\s_1(x,Y_{t-})\l(\rho dW_t^{(1)}+\sqrt{1-\rho^2}dW_t^{(2)}\r)\\
&+\int k_1(x,Y_{t-},z)\tilde N^{(2)}(dz,dt), \quad Y^x_0=y_0.\end{split}
\end{equation} This is the SDE \eqref{Y} where $\epsilon$ is set equal to $1$ and $X_{\ep,t}$ is set equal to $x$.
Let $\L_1^x$ denote the generator of $Y^x$, then, for $f\in C_b^2(\mathbb R)$, 
 \begin{equation}\label{L1}\begin{split}\L^x_1f(y):=&b_1(x,y)\p_yf(y)+\frac{1}{2}\s_1^2(x,y)\p^2_{yy}f(y)\\&
+\int\l(f(y+k_1(x,y,z))-f(y )-k_1(x,y,z)\p_yf(x,y) \r)\nu_2(z)dz.\end{split}\end{equation}
For fixed $p\in\R$  define the perturbed $\L^{x,p}_1$ generator for $f\in C_b^2(\R^2)$  by
\begin{equation}\label{pert-gen}\begin{split}
\L^{x,p}_1f(y):&=\l[\rho\s(x,y)\s_1(x,y)p +b_1(x,y)\r]\p_yf(y)+\frac{1}{2}\s_1^2(x,y)\p^2_{yy}f(y)\\&
+\int\l(f(y+k_1(x,y,z))-f(y )-k_1(x,y,z)\p_yf(x,y) \r)\nu_2(z)dz,\end{split}
\end{equation}
and let $Y^{x,p}$ be the process corresponding to the generator $\L_1^{x,p}$. 
For each $x,p\in\R$ we assume the following about $Y^{x,p}$

\begin{assmpn}[Ergodicity condition]\label{assmpn:Ydist}
The process $Y^{x,p}$  is Feller continuous with transition probability $p^{x,p}_t(y_0,dy)$, which at $t=1$ 
has a positive density  $p^{x,p}_1(y_0,y)$ with respect to some reference measure $\alpha(dy)$.
\end{assmpn}

\begin{assmpn}[Lyapunov condition]\label{assmpn:Lyap} 
 There exists a positive function $\zeta(\cdot)\in C^2(\R)$, such that $\zeta$ has compact finite level sets, 
and for each compact set $\Gamma\subset \R$, $\theta\in (0,1]$ and $l\in \mathbb R$, there exists a compact set $A_{l,\theta,\Gamma}\subset \mathbb R$ such that 
\begin{equation}\label{Lyapunov}\begin{split}
&\{y\in\R:  -\theta e^{-\zeta}\L_1^{x,p}e^{\zeta}(y)-(|V(y;x,p)|+|b_0(x,y)p|+\sigma^2(x,y))\leq l\}\subset A_{l,\theta,\Gamma}, \quad \forall p\in\Gamma, \forall x\in \R.
\end{split}
\end{equation}
\end{assmpn}
\begin{rem}
In the case where the domain of $Y$ is compact, we can define $\zeta\equiv 0$ which will satisfy Assumption \ref{assmpn:Lyap}. 
\end{rem}
\begin{rem}
Some arguments are simpler in the special case $Y^{x,p}$ in addition has a unique invariant probability measure $\pi^p(x,\cdot)$ with respect to which $p^{x,p}_t(y_0,y)$ is symmetric and $\pi^p(x,\cdot)$ is reversible, that is
\[\int_{y\in\R}\L^{x,p}_1f(y) \pi^p(x,y)dy=0, \;\forall f\in C^\infty_c(\R).\]
and
\[\int f(y)\L_1^{x,p}g(y)\pi^p(x,y)dy=\int g(y)\L_1^{x,p}f(y)\pi^p(x,y)dy, \;\forall f,g\in C^2(\R)\]

\end{rem}

\subsection{Examples}\label{sec:Examples}
We give some examples of  $Y$ that satisfy Assumption~\ref{assmpn:Ydist} as well as a multiplicative  ergodicity condition of the form \[e^{-\tilde\zeta}\L_1^{x,p}e^{\tilde\zeta}(y)\leq -\tilde\zeta(y) +d\] for $\tilde\zeta$ with compact level sets and some constant $d>0$. 
One needs to know the coefficients of the process $X$ to know whether these examples also satisfy Assumption~\ref{assmpn:Lyap}. 
Define $\tilde{V}^p(x,y):=V(y;x,p)+|b_0(x,y)|+\sigma^2(x,y)$. If $\tilde{V}^p(\cdot, \cdot)$ is a bounded function for bounded $p$, then the multiplicative ergodicity condition is sufficient for Assumption 2.4 to hold.
If $\tilde{V}^p(x,y)$ is an unbounded function but has compact level sets, 
and  if  the  $\tilde{V}$-multiplicative ergodicity condition of the form \[e^{-\zeta}\L_1^{x,p}e^{\zeta}(y)\leq -c\tilde{V}^p(x,y) +d, \;\; \mbox{ for some }c>1, d>0\] is met for $\zeta$ with compact finite level sets, then  it may be possible to use this condition in place of Assumption~\ref{assmpn:Lyap} and obtain all the same results (see Example~\ref{fin-example} and Remark~\ref{rem-lyap}).

\begin{ex}
Let $\rho=0$, $b_1(x,y)= - b_1(x)y, \sigma_1(x,y)=\sigma_1(x)$ and $k_1(x,y,z)= \frac{\sigma_1(x)}{\sqrt{b_1(x)}}z-y$
, where $b_1(x), \sigma_1(x)>0$ are continuous. 
Let $\nu_2(z)=\exp\{-z^2\}$. Since the intensity measure $\nu_1$ is a bounded measure, we use  $N^{(2)}$ instead of the compensated Poisson process 
 $\tilde N^{(2)}$. For each $x\in \mathbb R$, the solution to  
\begin{equation*}
dY^x_t= - b_1(x)Y^x_tdt+\sigma_1(x)dW^{(2)}_t+\int_{\mathbb R- \{0\}} (\frac{\sigma_1(x)}{\sqrt{b_1(x)}}z-Y^x_t)N^{(2)}(dz,dt)
\end{equation*}
has unique invariant probability distribution $\pi(x,dy)=\sqrt{\frac{b_1(x)}{\pi \sigma_1^2(x)}}\exp\{-\frac{b_1(x)y^2}{\sigma_1^2(x)}\}dy$  and $Y^x$ is symmetric with respect to it. Geometric ergodicity  is satisfied by $\tilde\zeta(y):=\frac{b_1(x)}{2\sigma_1^2(x)}y^2$.
\end{ex}

\begin{ex}
Take $\rho=0$ and let $\alpha\in (1,2)$. Let $Z_t$ be a 1-dimensional symmetric Levy  process whose Levy measure is $\nu_2(z)dz=|z|^{-(1+\alpha)}{\bf 1}_{|z|>1}dz$. Its infinitesimal generator is the truncated  fractional Laplacian $-(-\Delta)_{>1}^{\alpha/2}$ defined as 
\[-(-\Delta)_{>1}^{\alpha/2}f(y)=\int_{|z|>1}(f(y+z)-f(y))\frac{1}{|z|^{1+\alpha}}dz, \quad \text{for}\quad  f\in C^2_c(\R).\] Let $\sigma_1(x,y):=a(x)\sigma_1(y)$ where $a(\cdot), \sigma_1(\cdot)>0$ are such that $a(\cdot)$ is continuous and $\sigma_1(\cdot)$ is locally $1/\alpha$-H\"older continuous and $\liminf_{|y|\to\infty}\frac{\sigma_1(y)}{|y|}>0$.  Let
\begin{equation*}
dY^x_t=\sigma_1(x,Y^x_{t-})dZ_t.
\end{equation*}
 Then from Theorem 1.7(i) in \cite{CW14},  $\pi(x,dy):=\frac{\sigma_1(y)^{-\alpha}dy}{\int \sigma_1(y)^{-\alpha}dy}$ is the unique invariant probability measure for the $Y^x$ process and $Y^x$ is $\pi(x,\cdot)$-reversible. From Lemma 3.2 in \cite{CW14}, we get $\tilde\zeta(y):= \ln(1+|y|^\theta)$ for $\theta\in (0,1)$ satisfies the geometric ergodicity condition. The special case of this example with $\sigma_1\equiv 1$ is also considered in \cite{BCS}.
\end{ex}

\begin{ex} Let  $c(z,z')$ be a non-symmetric function such that $0<c_0\le c(z,z')\le c_1$, $c(z,z')=c(z,-z')$ and $|c(z,z'')-c(z',z'')|\le c_2|z-z'|^\beta$ for some $\beta\in(0,1)$. Let $\alpha\in (0,2)$, and $Z_t$ be a $1$-dimensional non-symmetric process  whose 
infinitesimal generator is defined by 
\[\mathcal L^{\alpha}_c f(y) =\lim_{\delta\to 0}\int_{|z|>\delta}(f(y+z)-f(y))\frac{c(y,y+z)}{|z|^{1+\alpha}}dz, \quad \text{for}\quad  f\in C^2_c(\R).\]
Let 
\begin{equation*}
dY_t=-Y_tdt+dZ_t.
\end{equation*}
Heat kernel estimates from \cite{CZ13} imply this non-symmetric jump diffusion is Feller continuous with a positive transition density $p_t(y_0,y), \forall t>0$ 
\end{ex}

\begin{ex} Let $Y^x$ be a birth-death Markov chain with birth rate $r_{+}(y)=\lambda(x)$ and death rate $r_{-}(y)=\mu(x) y$, satisfying $\lambda(x),\mu(x)> 0$. Since its state space is countable its transition density is positive, with a unique reversible invariant distribution $\pi(x,y)=e^{-\lambda(x)/\mu(x)}\frac{(\lambda(x)/\mu(x))^y}{y!}$, $y\in\{0,1,\dots\}$. 
\end{ex}

\section{Large deviation principle}\label{sec:large_deviations}
We prove a large deviation principle for $\{X_{\ep,t}\}_{\ep>0}$ as $\ep\to 0$ using the viscosity solution approach to verify convergence of a sequence of exponential generators. 
Define \begin{align}\label{uep}
u^h_\ep(t,x,y):=\ep \ln E\l[e^{\frac{h(X_{\ep,t})}{\ep}}|X_{\ep,0}=x,Y_{\ep,0}=y\r], \end{align}
where $h\in C_b(\R)$, the space of bounded uniformly continuous functions on $\R$.
It can be shown (see \cite{FK06}) that for each $h\in C_b(\mathbb R)$, $u^h_\ep$ is a viscosity solution of the Cauchy problem: 
\begin{equation}\label{PDE}
\begin{split}
\p_tu&=H_\ep u \qquad \text{ in }(0,T]\times\R\times\R,\\
u(0,x,y)&=h(x), \qquad \text{ for }(x,y)\in \R\times\R,
\end{split}
\end{equation}
where the non-linear operator is the exponential generator:
 \begin{equation}\label{Hep}
\begin{split}
&H_\ep u(x,y):=\ep e^{-u/\ep}\L_\ep e^{u/\ep}\\
&= b(x,y)\p_xu(x,y)+\rho\s(x,y)\s_1(x,y)\p^2_{xy}u(x,y)+\frac{1}{2}\s^2(x,y)(\p_{x}u(x,y))^2\\
&+\ep\l[b_0(x,y)\p_xu(x,y)+\frac{1}{2}\s^2(x,y)\p^2_{xx}u(x,y)\r]\\
&+\int\l(e^{\frac{u(x+\ep k(x,y,z),y)-u(x,y)}{\ep}}-1-k(x,y,z)\p_xu(x,y) \r)\nu_1(z)dz\\
&+\frac{1}{\ep}\Bigl[\rho\s(x,y)\s_1(x,y)\p_{x}u(x,y)\p_yu(x,y)
 +b_1(x,y)\p_yu(x,y)+\frac{1}{2}\s_1^2(x,y)\p^2_{yy}u(x,y)\Bigr]\\
&+\int\l(e^{\frac{u(x,y+k_1(x,y,z))-u(x,y)}{\ep}}-1 -\frac{k_1(x,y,z)}{\ep}\p_yu(x,y)\r)\nu_2(z)dz +\frac{1}{2\ep^2}\s_1^2(x,y)(\p_{y}u(x,y))^2.
\end{split}
\end{equation}


\medskip
\noindent In systems with averaging under the law of large number scaling we can identify the limiting non-linear operator $\ol H_0$ as the solution to an eigenvalue problem for the driving process $Y^x$ obtained from $Y_{\ep}$ with $X_{\ep}=x$ and $\ep=1$.

\medskip
\noindent We first identify $u_0$, the limit of $u_\ep$ as $\ep\to 0$, using heuristic arguments.
Assume \begin{equation}\label{expansion}u_\ep(t,x,y)=u_0(t,x)+\ep u_1(t,x,y)+\ep^2 u_2(t,x,y)+\hdots.\end{equation}  Using the $\ep$ expansion of $u_\ep$, \eqref{expansion}, in equation \eqref{PDE}, and collecting terms of $O(1)$, we get 
\begin{equation}\label{u1}\begin{split}\p_tu_0(t,x)&=b(x,y)\p_xu_0(t,x)+\frac{1}{2}\s^2(x,y)(\p_{x}u_0(t,x))^2\\ &+\int\l(e^{\p_{x}u_0(t,x)k(x,y,z)}-1 -k(x,y,z)\p_xu_0(t,x)\r)\nu_1(z)dz\\
&+\rho\s(x,y)\s_1(x,y)\p_{x}u_0(t,x)\p_yu_1(t,x,y)+b_1(x,y)\p_yu_1(t,x,y)+\frac{1}{2}\s_1^2(x,y)\p^2_{yy}u_1(t,x,y)\\
&+\int\l(e^{u_1(t,x,y+k_1(x,y,z))-u_1(t,x,y)}-1-k_1(x,y,z)\p_yu_1(t,x,y)\r)\nu_2(z)dz\\
&+\frac{1}{2}\s_1^2(x,y)(\p_{y}u_1(t,x,y))^2.
\end{split}
\end{equation}
Please note that as this is merely a formal derivation,  we have ignored some technical details (such as justifying interchanging the limit and integral to get the the second line in the above equation). The rigorous proof that follows shows that this formal derivation is indeed correct.  Denote $\p_xu_0(t,x)$ by $p$ and $\p_tu_0(t,x)$ by $\lambda$. Fix $t,x$ and hence $p$ and $\lambda$. 
Using the perturbed $\L_1$ generator (\ref{pert-gen}),
the equation \eqref{u1}  can be written as an eigenvalue problem:
\begin{equation}\label{EVP}
\l(\L_1^{x,p}+V(y;x,p)\r)e^{u_1}=\lambda e^{u_1},
\end{equation}
where $V$ is as defined in \eqref{V}. 
 Note that  the eigenvalue $\lambda$ depends on $x$ and $p$, and that if we write $\ol{H}_0(x,p):=\lambda$ then $u_0$ satisfies
 \[\p_tu_0(t,x)=\ol{H}_0(x,\p_xu_0(t,x)),\]
 In the rigorous proof that follows, we identify the limiting operator $\ol{H}_0$ to be as defined in \eqref{H_0bar} 
 which is shown in \cite{DV75} to be the principal eigenvalue $\lambda$ in \eqref{EVP}.  By the expansion \eqref{expansion}, it is clear that $u_0(0,x)=h(x)$. 

\medskip
\noindent The approach of \cite{FK06} for obtaining the large deviation principle is to prove convergence of nonlinear semigroups associated with the nonlinear operators $H_\ep$. 
 In \cite{FK06} the first step is identifying the limit operator $\ol{H}_0$. Existence and uniqueness of the limiting semigroup is obtained by verifying the `range condition' for the limit operator. This amounts to showing
existence of solutions to the equation $(I-\alpha \ol{H}_0)f=h$ for small enough $\alpha>0$  and sufficiently large
class of functions $h$. Since the range condition is difficult to verify, a viscosity method approach is adopted and the range condition is replaced with a comparison principle condition for $(I-\alpha \ol{H}_0)f=h$.  In the viscosity method, existence of the limiting semigroup is by construction, while uniqueness is obtained via the comparison principle.

The approach in this paper uses convergence of viscosity solutions to the Cauchy problem for PIDEs \eqref{PDE}, and to show existence and uniqueness of the limit one then needs to verify the comparison principle for the Cauchy problem $\p_tu_0(t,x)=\ol{H}_0(x,\p_xu_0(t,x))$, with $u_0(0,x)=h(x)$. 


\medskip
\noindent In the proof of the comparison principle we will also use a Donsker-Varadhan variational representation (\cite{DV75}) for $\ol H_0$  as follows. 
Let $\mathcal P(\mathbb R)$ denote the space of probability measures on $\R$. Define the rate function $J(\cdot;x,p):\mathcal P(\R)\mapsto \R\cup \{+\infty\}$ by 
\begin{equation}\label{rate}
J(\mu;x,p):=-\inf_{g\in D^{++}(\L_1^{x,p})}\int_\R \frac{\L_1^{x,p}g}{g}d\mu,
\end{equation}
where $D^{++}(\L_1^{x,p})\subset C_b(\R)$ denotes the domain of $\L_1^{x,p}$ with functions that are strictly bounded below by a positive constant. 
Then \cite{DV75} implies that the principal eigenvalue $\ol H_0(x,p)=\lambda$ in \eqref{EVP} is also given by
\begin{equation}\label{H_0bar}
\ol H_0(x,p)=\sup_{\mu\in \mathcal P(\R)}\l(\int V(y;x,p)d\mu(y)-J(\mu;x,p)\r),
\end{equation}
where  $V(y;x,p)=b(x,y)p+\frac{1}{2}\s^2(x,y)p^2 +\int\l(e^{pk(x,y,z)}-1 \r)\nu_1(z)dz$.

\medskip 
\begin{rem}
\noindent In the special case $Y^{x,p}$ also has a reversible invariant measure $\pi^{p}(x,\dot)$, we can use the Dirichlet form representation for $J$. Define the Dirichlet form associated with $Y^{x,p}$ by
\[\mathcal E^{x,p}(f,g):=-\int f(y) \L_1^{x,p}g(y) d\pi^p(x,dy).\]
Then, Theorem 7.44 in Stroock \cite{Stroo84} implies that 
\begin{equation}\label{Dirichlet}
J(\mu;x,p)=\begin{cases}
\mathcal E^{x,p}\l(\sqrt{\frac{d\mu}{d\pi^p(x,\cdot)}},\sqrt{\frac{d\mu}{d\pi^p(x,\cdot)}}\r)\quad &\text{if}\quad \mu(\cdot)\ll \pi^p(x,\cdot)\\
+\infty & \text{if}\quad \mu(\cdot)\not\ll \pi^p(x,\cdot).\end{cases}\end{equation}
The variational formula (\ref{H_0bar}) then reduces to the classical Rayleigh-Ritz formula
\begin{equation}\label{H_0barRR}
\ol H_0(x,p)=\sup_{f\in L^2(\pi^p), |f|^2=1}\l(\int V(y;x,p)f^2(y)d\pi^p(x,y)dy+\langle \L_1^{x,p}f,f\rangle\r).
\end{equation}
\end{rem}

\bigskip
\noindent To sum up, we will prove that:
\begin{lemma}\label{uep-converge} Let $\ol H_0$ be as defined in \eqref{H_0bar}, and suppose the comparison principle holds for the nonlinear Cauchy problem:
\begin{equation}\label{u0-eqn}
\begin{split}
&\p_tu_0(t,x)=\ol H_0(x,\p_xu_0(t,x)), \quad \text{for}\quad (t,x)\in (0,T]\times\R;\\
&u_0(0,x)=h(x).
\end{split}
\end{equation}
 Under the Assumptions~\ref{assmpn:Lip}-\ref{assmpn:Lyap}, the sequence of functions $\{u^h_\ep\}_{\ep>0}$ defined in (\ref{uep}) converges uniformly over compact subsets of $[0,T]\times\R\times\R$  as $\ep\to 0$ to the unique continuous viscosity solution $u^h_0$ of \eqref{u0-eqn}. 
\end{lemma}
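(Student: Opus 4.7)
The plan is to prove convergence by the standard viscosity-theoretic route: show that the half-relaxed limits of $\{u^h_\ep\}$ are respectively a sub- and a super-solution of the limit equation \eqref{u0-eqn}, and then invoke the assumed comparison principle. Concretely, first note that $\|u^h_\ep\|_\infty\le\|h\|_\infty$ uniformly in $\ep$, so the upper- and lower-semicontinuous envelopes
\[
\ol u(t,x,y):=\limsup_{\substack{\ep\to 0\\(t',x',y')\to(t,x,y)}}u^h_\ep(t',x',y'),\qquad
\uu(t,x,y):=\liminf_{\substack{\ep\to 0\\(t',x',y')\to(t,x,y)}}u^h_\ep(t',x',y')
\]
are well defined and bounded. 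A preliminary step is to show that both envelopes are in fact independent of $y$, which follows from the fact that the $y$-derivatives in $H_\ep$ are multiplied by $1/\ep$ and $1/\ep^2$, so any non-trivial $y$-dependence in the limit would produce a blow-up violating the uniform bound. After that, I would verify that $\ol u(t,x)$ is a subsolution and $\uu(t,x)$ is a supersolution of $\p_tu_0=\ol H_0(x,\p_xu_0)$ with initial data $h$. The comparison principle then gives $\ol u\le\uu$, and since $\uu\le\ol u$ by definition, equality holds and the convergence is uniform on compacts.

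The heart of the argument is the sub/super-solution verification, which I would carry out by the perturbed test function method of Evans, as adapted in \cite{FFK12}. Given a smooth test function $\phi(t,x)$ touching $\ol u$ from above at $(t_0,x_0)$, set $p:=\p_x\phi(t_0,x_0)$ and $\lambda:=\ol H_0(x_0,p)$. The eigenvalue characterization \eqref{EVP} provides a positive eigenfunction $f^{x_0,p}$ of $\L_1^{x_0,p}+V(\cdot;x_0,p)$ with eigenvalue $\lambda$, and I would form the perturbed test function
\[
\phi_\ep(t,x,y):=\phi(t,x)+\ep\,\ln f^{x_0,p}(y)+\delta\bigl(\zeta(y)+|x-x_0|^2+(t-t_0)^2\bigr),
\]
where $\zeta$ is the Lyapunov function of Assumption~\ref{assmpn:Lyap} and $\delta>0$ is a small penalization parameter. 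Since $u^h_\ep-\phi_\ep$ attains a local maximum $(\tep,\xep,\yep)$ by the Lyapunov-based coercivity (which localizes $\yep$ to a compact set uniformly in $\ep$), applying the viscosity subsolution inequality $\p_t\phi_\ep\le H_\ep\phi_\ep$ at this point, dividing where appropriate by $\ep$, and letting $\ep\to 0$ followed by $\delta\to 0$ should yield $\p_t\phi(t_0,x_0)\le\ol H_0(x_0,p)$. The supersolution case is dual but harder, since one cannot expect a positive eigenfunction for the adjoint inequality; here I would rely on the Donsker–Varadhan variational representation \eqref{H_0bar}, choosing a near-optimizing $\mu\in\mathcal P(\R)$ and a near-minimizer $g\in D^{++}(\L_1^{x_0,p})$ in \eqref{rate} to build an appropriate perturbed test function of the form $\phi-\ep\ln g$.

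The main technical obstacle is the two-step localization: (i) ensuring that the extrema $\yep$ of the perturbed test functions stay in a compact set uniformly in $\ep$, and (ii) passing to the limit in the non-local integral terms of $H_\ep$ given that the integrands $e^{(\phi_\ep(x+\ep k,y)-\phi_\ep(x,y))/\ep}$ and $e^{(\phi_\ep(x,y+k_1)-\phi_\ep(x,y))/\ep}$ require careful Taylor-type analysis (yielding the $V$ term and, for the $\ep^0$ $y$-jump term, a term involving the corrector $\ln f^{x_0,p}$). The Lyapunov condition \eqref{Lyapunov} is designed precisely to handle (i): the choice of test function makes the expression in Assumption~\ref{assmpn:Lyap} bound the sup of $-\ep^{-1}\L_\ep\phi_\ep$ on $y$-translates, forcing $\yep$ into the compact set $A_{l,\theta,\Gamma}$. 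With the localization in hand, step (ii) reduces to standard dominated convergence arguments using the Lipschitz and growth assumptions \ref{assmpn:Lip}–\ref{assmpn:growth}, together with continuity of $\ol H_0$ in its arguments. The initial condition $u^h_0(0,x)=h(x)$ follows by a separate barrier construction or by an explicit upper/lower bound on $u^h_\ep(t,x,y)$ for small $t$ using $h\in C_b(\R)$ and the moment bounds on $X_{\ep,t}$.
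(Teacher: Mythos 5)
Your overall skeleton (uniform bound, half-relaxed limits, sub/supersolution verification, then the assumed comparison principle) matches the paper's, but the way you carry out the crucial middle step has two genuine gaps. First, your subsolution argument builds the perturbed test function from a positive principal eigenfunction $f^{x_0,p}$ of $\L_1^{x_0,p}+V(\cdot;x_0,p)$ with eigenvalue $\ol H_0(x_0,p)$. Under Assumptions~\ref{assmpn:Lip}--\ref{assmpn:Lyap} no such eigenfunction (let alone one smooth enough that $\ep\ln f^{x_0,p}$ is an admissible test function for the nonlocal operator $H_\ep$) is known to exist; the eigenvalue problem \eqref{EVP} appears in the paper only as a formal derivation, and $\ol H_0$ is \emph{defined} by the Donsker--Varadhan formula \eqref{H_0bar}. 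The paper avoids exactly this issue by using an indexed family of correctors $g=(1-\theta)\xi+\theta\zeta$ with \emph{arbitrary} $\xi\in C_c^2(\R)$, defining intermediate operators $H_0(x,p;\xi,\theta)=\sup_y\{\cdots\}$ and $H_1(x,p;\xi,\theta)=\inf_y\{\cdots\}$ in \eqref{H0}, \eqref{H1}, and only then relating $\inf_{\xi,\theta}H_0$ and $\sup_{\xi,\theta}H_1$ to $\ol H_0$; the log-eigenfunction construction you propose is used in the paper only in the explicit example of Section~\ref{fin-example}, where the eigenfunction can be computed, and there it requires a separate ad hoc treatment.

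Second, your supersolution step does not work as stated: choosing a near-minimizer $g$ in \eqref{rate} for a near-optimal $\mu$ only gives an upper bound on $\int\bigl(V+\L_1^{x_0,p}g/g\bigr)d\mu$, i.e.\ control of a $\mu$-average, whereas what the supersolution inequality requires is the pointwise-in-$y$ bound $\sup_{\xi,\theta}\inf_y\bigl\{V(y;x,p)+(1+\theta)e^{-\xi}\L_1^{x,p}e^{\xi}(y)-\theta e^{-\zeta}\L_1^{x,p}e^{\zeta}(y)\bigr\}\ge\ol H_0(x,p)$, the hard half of the paper's Operator Inequality \eqref{comparison-ineq}. The paper obtains it from Lemma B.10 of \cite{FK06} together with a uniform large-deviation \emph{lower} bound for the occupation measures of $Y^{x,p}$ (Theorem 5.5 of \cite{DV83}, which is where the Feller continuity and positive transition density of Assumption~\ref{assmpn:Ydist} enter), plus lower semicontinuity of $\mu\mapsto\int V\,d\mu$ (Lemma~\ref{sec:appendix-phi-lsc}); the matching bound $\inf_{\xi,\theta}H_0\le\ol H_0$ comes from Lemma 11.35 of \cite{FK06}. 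None of this ergodic-theoretic input appears in your proposal, and without it the inequality you need has no proof (note that the trivial minimax inequality goes in the opposite direction). Your claim that the half-relaxed limits are $y$-independent "because the $y$-derivatives carry factors $1/\ep$, $1/\ep^2$" is also not an argument -- the paper sidesteps it by defining $u^h_{\uparrow},u^h_{\downarrow}$ in Definition~\ref{relaxed-half-limits} as functions of $(t,x)$ only, taking extrema over $y$ in compact sets -- but this is minor compared to the two gaps above.
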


\begin{lemma}\label{exp-tight}
The sequence of processes $\{X_{\ep,t}\}_{\ep>0}$ is exponentially tight.
\end{lemma}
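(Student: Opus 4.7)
The plan is to establish exponential tightness of $\{X_\ep\}$ in $D([0,T],\R)$ via the standard Puhalskii/Feng--Kurtz criterion: exponential compact containment together with an exponential Aldous-type estimate on the Skorokhod modulus of continuity. That is, for every $a>0$ I want $R_a$ with $\limsup_{\ep\to 0}\ep\log P(\sup_{t\le T}|\xept|\ge R_a)\le -a$, and for every $a,\eta>0$ a $\delta>0$ with $\limsup_{\ep\to 0}\ep\log P(w'(X_\ep,\delta,T)>\eta)\le -a$.

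For compact containment I build an exponential Lyapunov function by combining a slow-variable container with the fast-variable Lyapunov function $\zeta$ from Assumption \ref{assmpn:Lyap}. Take $F_\lambda(x,y)=\lambda\phi(x)+\ep\zeta(y)$, with $\phi\in C^2(\R)$ having compact sublevel sets and bounded first derivative (e.g.\ $\phi(x)=\sqrt{1+x^2}$, so that $|\phi'|\le 1$). A direct substitution into \eqref{Hep}, with $p:=\lambda\phi'(x)$, yields
\begin{equation*}
H_\ep F_\lambda(x,y) \;=\; V(y;x,p)+e^{-\zeta(y)}\L_1^{x,p}e^{\zeta(y)}+R_\ep(x,y),
\end{equation*}
where $R_\ep$ collects the explicit $O(\ep)$ terms in \eqref{Hep} together with the finite-$\ep$ Taylor correction $\lambda(\phi(x+\ep k)-\phi(x))/\ep-\lambda k\phi'(x)=O(\ep k^2)$ in the $x$-jump integral, controlled by Assumption \ref{assmpn:growth}. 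The crucial point is that $|\phi'|\le 1$ forces $p$ into the compact set $\Gamma=[-\lambda,\lambda]$, so Assumption \ref{assmpn:Lyap} with $\theta=1$ and $l=0$ gives
\begin{equation*}
V(y;x,p)+e^{-\zeta(y)}\L_1^{x,p}e^{\zeta(y)}\;\le\;-|b_0(x,y)p|-\sigma^2(x,y)\;\le\;0
\end{equation*}
for $y$ outside a compact set $A=A_{0,1,\Gamma}$, uniformly in $x$. On $A$ continuity of the coefficients and the growth bound in Assumption \ref{assmpn:growth} control the left-hand side by $C_\lambda(1+|x|^2)$, which after localizing at $\tau_R=\inf\{t:|\xept|\ge R\}$ becomes a finite constant $M_{R,\lambda}$. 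The process $\exp\bigl(F_\lambda(X_{\ep,t\wedge\tau_R},Y_{\ep,t\wedge\tau_R})/\ep-M_{R,\lambda}t/\ep\bigr)$ is then a non-negative supermartingale, and Doob's maximal inequality together with $\phi(X_{\ep,\tau_R})\ge\phi(R)$ on $\{\tau_R\le T\}$ yields
\begin{equation*}
\ep\log P(\tau_R\le T)\;\le\;\lambda\phi(x_0)+\ep\zeta(y_0)+M_{R,\lambda}T-\lambda\phi(R),
\end{equation*}
and a joint limit $\ep\to 0$, $R,\lambda\to\infty$ with $\lambda/R$ chosen appropriately delivers the required compact containment.

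For the Skorokhod modulus I split $X_\ep$ into its drift, Brownian and compensated Poisson parts according to \eqref{X}. On the compact-containment event, augmented by an analogous compact-containment estimate for $Y_\ep$ using $\zeta$ alone as Lyapunov function, Assumption \ref{assmpn:growth} makes $b,b_0,\sigma^2$ and $\int k^2\nu_1$ uniformly bounded, so the drift part has modulus $O(\delta)$. The small-noise Brownian contribution $\sqrt\ep\int\sigma\,dW^{(1)}$ satisfies a Dambis--Dubins--Schwarz/Gaussian-tail bound $P(\sup_{|s-t|\le\delta}|\cdot|\ge\eta)\le\exp(-c\eta^2/(\ep\delta))$, and for the compensated Poisson integral $\ep\int\!\int k\,\tilde N^{\frac1\ep,(1)}$ a Bennett-type exponential inequality for purely discontinuous martingales with jump sizes $O(\ep)$ and predictable quadratic variation $O(\ep\delta)$ produces the matching $\ep$-exponential bound.

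The main obstacle is turning the pointwise condition in Assumption \ref{assmpn:Lyap} into a uniform-in-$(x,y,\ep)$ upper bound on $H_\ep F_\lambda$: the Lyapunov condition pins down the $y$-behaviour outside $A$, but $V(y;x,p)$ can grow polynomially in $x$ on $A$, which forces the $\tau_R$-localization and a careful joint scaling of $\lambda$ and $R$ at the very end. A secondary technicality is ensuring that the nonlinear jump integral $\int(e^{pk(x,y,z)}-1-pk(x,y,z))\nu_1(dz)$ remains bounded for $p\in\Gamma$, which beyond the $L^2$-growth of Assumption \ref{assmpn:growth} is implicit in the finiteness built into \eqref{V_bound}.
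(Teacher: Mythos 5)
Your overall route differs from the paper's: you aim at path-space exponential tightness in $D([0,T],\R)$ (compact containment plus an exponential modulus estimate), whereas the paper only needs and only proves exponential tightness of the fixed-time marginals $X_{\ep,t}$ — which is all that Bryc's theorem (Theorem 4.4.2 in \cite{DeZ98}) requires — via a single supermartingale bound, remarking that the path-level version would follow similarly from Corollary 4.17 of \cite{FK06}. That difference by itself would be acceptable, but your compact-containment step has a genuine gap. With $F_\lambda(x,y)=\lambda\phi(x)+\ep\zeta(y)$ and $\phi(x)=\sqrt{1+x^2}$, the momentum variable is $p=\lambda\phi'(x)$ with $|\phi'|$ of order one for large $|x|$, so on the localized region $\{|x|\le R\}$ (with $y$ in the compact set $A$) the quadratic term $\tfrac12\sigma^2(x,y)p^2$ is allowed by Assumption \ref{assmpn:growth} to be of order $\lambda^2(1+R^2)$; hence $M_{R,\lambda}\gtrsim \lambda^2R^2$, while the barrier you gain at $\tau_R$ is only $\lambda\phi(R)\approx\lambda R$. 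The exponent $\lambda\phi(x_0)+M_{R,\lambda}T-\lambda\phi(R)$ therefore cannot be driven to $-\infty$: any choice with $\lambda R\to\infty$ makes $\lambda^2R^2T$ dominate, and keeping $\lambda R$ bounded caps the achievable decay rate at $O(1/T)$. So ``a joint limit with $\lambda/R$ chosen appropriately'' does not exist unless one additionally assumes $b,\sigma$ bounded in $x$. A second problem with letting $\lambda\to\infty$ is the jump term: $\int(e^{pk}-1-pk)\nu_1$ for $|p|\le\lambda$ needs exponential moments of $k$ of arbitrarily high order, which neither Assumption \ref{assmpn:growth} nor \eqref{V_bound} provides (the latter is only a lower bound on $V$).

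The paper's proof avoids both issues by a different choice of Lyapunov function: $f(x)=\ln(1+x^2)$, so that $|\p_xf|\le 1$ and $\p_xf$ decays like $1/|x|$. Then $b\,\p_xf$, $\sigma^2(\p_xf)^2$ and the jump integral are controlled uniformly in $x$ (with $|p|\le 1$ fixed), and the unbounded-in-$y$ contributions are absorbed by Assumption \ref{assmpn:Lyap}, whose inclusion of $|V|$, $|b_0p|$ and $\sigma^2(x,y)$ is designed exactly for this, giving $\sup_{x,y}H_\ep f_\ep\le C$ with $C$ independent of the containment level. Since the level $c$ enters the optional-stopping bound only through the compact set $K_c=\{f\le c\}$, one gets $\ep\ln P(X_{\ep,t}\notin K_c)\le tC-f_\ep(x_0,y_0)-c\to-\infty$ as $c\to\infty$ with no localization and no joint scaling. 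If you want the path-space statement, run your argument with this logarithmic $f$ (and keep $|p|\le1$) rather than with $\lambda\sqrt{1+x^2}$; your modulus-of-continuity step is then plausible but must be carried out on the containment event, where the needed boundedness of the coefficients holds.
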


\begin{thm}\label{LDP}
Let $X_{\ep,0}=x_0$, and suppose all the Assumptions 
from Lemma~\ref{uep-converge} hold. Then, $\{X_{\ep,t}\}_{\ep>0}$ satisfies a large deviation principle with speed $1/\ep$ and good rate function 
\begin{equation}\label{rate_fn1}I(x,x_0,t)=\sup_{h\in C_b(\R)}\{h(x)-u^h_0(t,x_0)\}.\end{equation}
\end{thm}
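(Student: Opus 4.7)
The plan is to combine Lemma~\ref{uep-converge} with Lemma~\ref{exp-tight} via the inverse Varadhan (Bryc) framework to extract the LDP. The rate function in \eqref{rate_fn1} is exactly the Legendre-type dual of the logarithmic moment generating function $h\mapsto u^h_0(t,x_0)$, so its form is dictated by the methodology and needs only to be verified through a standard packaging argument.

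Concretely, I proceed as follows. For each $h\in C_b(\R)$, define
\[\Lambda_t(h):=\lim_{\ep\to 0}\ep\ln E\l[e^{h(X_{\ep,t})/\ep}\;\big|\;X_{\ep,0}=x_0,\,Y_{\ep,0}=y_0\r].\]
Lemma~\ref{uep-converge} identifies $\Lambda_t(h)=u^h_0(t,x_0)$, where the limit depends only on the slow initial datum $x_0$ because the limiting Cauchy problem \eqref{u0-eqn} carries no $y$-dependence; in particular the fast-variable initial value $y_0$ washes out. Lemma~\ref{exp-tight} supplies exponential tightness of the family of laws of $X_{\ep,t}$. These two inputs are precisely the hypotheses of Bryc's inverse Varadhan theorem (see e.g.\ Dembo--Zeitouni, Theorem~4.4.2, or the version in Feng--Kurtz~\cite{FK06}): from them the family $\{X_{\ep,t}\}_{\ep>0}$ obeys a large deviation principle with speed $1/\ep$ and rate function
\[I(x,x_0,t)=\sup_{h\in C_b(\R)}\{h(x)-\Lambda_t(h)\}=\sup_{h\in C_b(\R)}\{h(x)-u^h_0(t,x_0)\},\]
which is exactly \eqref{rate_fn1}. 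Lower semicontinuity and compactness of sublevel sets of $I$ (so that $I$ is a \emph{good} rate function) are automatic from exponential tightness.

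Since both preparatory lemmas are in hand, the theorem reduces to a single application of Bryc's theorem and the argument is essentially bookkeeping. The genuinely hard work is concealed in the two lemmas — particularly in verifying the comparison principle for the limiting Hamilton--Jacobi equation \eqref{u0-eqn} needed in Lemma~\ref{uep-converge}, which is the main obstacle in the entire program. At this concluding step the only points that require care are: confirming that the convergence in Lemma~\ref{uep-converge} is locally uniform in $(t,x_0)$ on the full test-function class $C_b(\R)$ required by Bryc's theorem; verifying that the initial fast value $y_0$ genuinely drops out so that $I$ depends only on $(x,x_0,t)$ as stated; and noting that the exponential upper bound for compact sets and the lower bound for open sets combine unconditionally under exponential tightness, so no further tightness-on-path-space arguments are needed for the finite-dimensional LDP claimed here.
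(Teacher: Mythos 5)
Your proof is correct and is essentially the paper's own argument: the paper proves Theorem~\ref{LDP} in one line by invoking Bryc's theorem (Theorem 4.4.2 in \cite{DeZ98}) with Lemmas~\ref{uep-converge} and \ref{exp-tight} as the two hypotheses, yielding the good rate function \eqref{rate_fn1}. Your additional remarks (the $y_0$-independence of the limit and the locally uniform convergence from Lemma~\ref{uep-converge}) are consistent with, and implicit in, the paper's treatment.
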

\begin{proof}
By Bryc's theorem (Theorem 4.4.2 in \cite{DeZ98}), Lemmas \ref{uep-converge} and \ref{exp-tight}  give us a large deviation principle for $\{X_{\ep,t}\}_{\ep>0}$ as $\ep\to 0$ with speed $1/\ep$ and good rate function $I$ given by \eqref{rate_fn1}. 
\end{proof}

One of the key conditions for Lemma  \ref{uep-converge} requires one to check that the comparison principle holds for $\ol H_0$.  This condition cannot be established using only the general Assumptions~~\ref{assmpn:Lip}-\ref{assmpn:Lyap}, and needs to be verified on a case by case basis. However, standard theory of comparison principles for viscosity solutions (Theorem 3.7 and Remark 3.8 in Chapter II of \cite{BD97}) implies that it does hold for \eqref{u0-eqn} as soon as $\bar H_0$ is uniformly continuous in $x,p$ on compact sets (see Lemma~\ref{sec:appendix-comparison} of the Appendix). In some cases $\bar H_0$ can be explicitly calculated (see Example~\ref{bio-example}) and continuity directly verified. In other cases one may need to resort to proving that the expression as on the right-hand side of \eqref{app:barH0} is non-positive, using the specifics for the case at hand. 
\begin{cor}\label{Cor:comp_principle} Any of the following separate sets of conditions are sufficient for the comparison principle for the non-linear Cauchy problem (\ref{u0-eqn}) to hold:
\begin{itemize}
\item[(i)] $\bar H_0$ is uniformly continuous in $x,p$ on compact sets;
\item[(ii)] the coefficients $b_1(x,y),\sigma_1(x,y), k_1(x,y,z)$ are independent of $x$, 
the coefficients $b(x,\cdot), \sigma(x,\cdot)$ are bounded (bounded functions of $y$) for each $x$, and $\rho=0$ i.e. the correlation between the Brownian motions driving $X$ and $Y$ is $0$. 
\end{itemize}
\end{cor}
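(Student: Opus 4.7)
The strategy is to dispose of (i) by a direct appeal to the cited viscosity comparison theorem, and to reduce (ii) to (i) by showing the listed structural conditions force $\bar H_0$ to be continuous on $\mathbb R\times\mathbb R$, hence uniformly continuous on compact sets. For (i) nothing beyond invoking Lemma~\ref{sec:appendix-comparison} (a restatement of Theorem 3.7 and Remark 3.8 of Chapter II in \cite{BD97}) is required.

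For (ii), the essential observation is that when $\rho=0$ and $b_1, \sigma_1, k_1$ depend only on $(y,z)$, the perturbed generator \eqref{pert-gen} collapses to the fixed operator
\[
\L_1 f(y) = b_1(y)\p_y f(y) + \frac{1}{2}\sigma_1^2(y)\p^2_{yy}f(y) + \int\!\left(f(y+k_1(y,z))-f(y)-k_1(y,z)\p_y f(y)\right)\nu_2(z)dz,
\]
independent of $(x,p)$. Consequently the rate function $J(\mu;x,p)$ in \eqref{rate} is independent of $(x,p)$ — denote it simply $J(\mu)$ — and the variational representation \eqref{H_0bar} becomes
\[
\bar H_0(x,p) = \sup_{\mu\in\mathcal P(\mathbb R)}\Bigl(\int V(y;x,p)d\mu(y) - J(\mu)\Bigr).
\]
The elementary bound $|\sup_\mu F_\mu - \sup_\mu G_\mu| \leq \sup_\mu|F_\mu - G_\mu|$ then gives
\[
|\bar H_0(x,p) - \bar H_0(x',p')| \leq \sup_{y\in\mathbb R}|V(y;x,p) - V(y;x',p')|,
\]
so the task reduces to showing that $V(y;x,p)$ is continuous in $(x,p)$ uniformly in $y$ on compact sets.

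For the drift and diffusion part of $V$, the bound $\sup_y|b(x,y)-b(x',y)| \leq \sqrt{K_1}|x-x'|$ from Assumption~\ref{assmpn:Lip}, combined with the assumed $y$-boundedness of $b(x,\cdot), \sigma(x,\cdot)$ for each $x$ (propagated to a locally $x$-uniform bound via the same Lipschitz estimate), yields the required continuity. The non-local jump term $\int(e^{pk(x,y,z)} - 1 - pk(x,y,z))\nu_1(dz)$ is the delicate part: one estimates the difference using the mean-value inequality $|(e^a - 1 - a) - (e^b - 1 - b)| \leq |a-b|(e^{|a|\vee|b|}-1)$ together with the Lipschitz bound $\int|k(x,y,z)-k(x',y,z)|^2\nu_1(dz) \leq K_1|x-x'|^2$. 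The main obstacle is obtaining a $y$-uniform majorant for $e^{|pk(x,y,z)|\vee|p'k(x',y,z)|}$, since $k(x,y,z)$ may be unbounded in $z$; one must exploit \eqref{V_bound} together with the boundedness of $b(x,\cdot), \sigma(x,\cdot)$ to secure uniform $\nu_1$-integrability of $e^{pk(x,y,\cdot)}$ in $y$ on compact $(x,p)$-sets, furnishing the dominating majorant needed to pass to the limit and conclude continuity of $\bar H_0$.
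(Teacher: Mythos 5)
Your overall route is the paper's: part (i) is disposed of by Lemma~\ref{sec:appendix-comparison}, and for part (ii) you observe, exactly as the paper does, that under $\rho=0$ and $x$-independence of $b_1,\sigma_1,k_1$ the operator \eqref{pert-gen} no longer depends on $(x,p)$, so $J$ in \eqref{rate} is $(x,p)$-free, and then you bound the oscillation of the supremum in \eqref{H_0bar} by the oscillation of $\int V\,d\mu$, reducing everything to continuity of $V(\cdot\,;x,p)$ in $(x,p)$ uniformly in $y$ on compacts. The paper states this last ingredient as a one-line assertion (``$\int V(y;x,p)\,d\mu(y)$ is uniformly Lipschitz in $x$ and $p$, uniformly over $\mu$, on compact sets'') and concludes via the fact that a supremum of uniformly Lipschitz functions is uniformly continuous; your treatment of the drift and diffusion contributions via Assumption~\ref{assmpn:Lip} and the assumed $y$-boundedness of $b(x,\cdot),\sigma(x,\cdot)$ is a correct fleshing-out of that assertion for those two terms.

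The gap is in your handling of the non-local term, which is precisely the point where you stop proving and start prescribing. You claim that \eqref{V_bound} together with the boundedness of $b(x,\cdot),\sigma(x,\cdot)$ ``secures uniform $\nu_1$-integrability of $e^{pk(x,y,\cdot)}$ in $y$'' and hence the dominating majorant; this does not follow. Condition \eqref{V_bound} is only a \emph{lower} bound on $V$, and since the integrand $e^{a}-1-a$ is nonnegative the jump part of $V$ is bounded below by $0$ automatically, so \eqref{V_bound} carries no upper or uniform-integrability information about $\int e^{pk(x,y,z)}\nu_1(z)dz$. What your argument actually needs is an upper bound, locally uniform in $(x,p)$ and uniform in $y$, on exponential moments of $k(x,y,\cdot)$ under $\nu_1$ (or on $\int(e^{pk}-1)k\,\nu_1(dz)$ if one differentiates in $p$), and neither the hypotheses listed in (ii) nor the standing Assumptions~\ref{assmpn:Lip}--\ref{assmpn:growth} (which give only $L^2(\nu_1)$ control of $k$, growing in $y$) supply it. So as written the last step of your proof of (ii) would fail; to close it you must either add such an exponential-moment hypothesis on $k$ or supply a genuine estimate, since the cited conditions cannot produce the majorant. (To be fair, the paper's own proof asserts the uniform Lipschitz property of $\int V\,d\mu$ without detail, so you have stalled exactly at the step the paper treats as immediate — but your proposed mechanism for justifying it is not a valid one.)
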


\begin{proof} For {\it (i)} see Lemma~\ref{sec:appendix-comparison} of the Appendix which is based on Theorem 3.7 and Remark 3.8 in Chapter II of \cite{BD97}.\\
 For {\it (ii)}  we can directly verify that under these conditions $\overline{H}_0(x,p)$, given in \eqref{H_0bar},  is uniformly continuous on compact sets of $x$ and $p$. For this, first observe that under the conditions in {\it (ii)} the rate function $J$ in \eqref{H_0bar} will be independent of $x$ and $p$. Additionally, $\int V(y;x,p) d\mu(y)$ is uniformly Lipschitz in $x$ and $p$ (uniform over all $\mu\in \mathcal P(\mathbb R)$ ), over compact sets of $x$ and $p$. Finally, since the supremum of  uniformly Lipschitz functions is uniformly  continuous over compact sets, we have the result. 
\end{proof}
Note that in Corollary \ref{Cor:comp_principle}, condition {\it (i)} is a more general condition and {\it (ii)} is a sufficient condition (on the coefficients of the model)  under which condition {\it (i)} holds.\\

In very special cases, we can further simplify the expression for the rate function:
\begin{cor}
 If the coefficients in the SDE \eqref{SDEs} are independent of  $x$, then $\ol{H}_0(x,p)$ becomes $\ol{H}_0(p)$ and by Lemma D.1 in \cite{FFK12}, we get 
\begin{equation}\label{rate-fn-no-x}I(x,;x_0,t)= t\ol{L}_0\l(\frac{x_0-x}{t}\r),\end{equation}
where $\ol{L}_0(\cdot)$ is the Legendre transform of $\ol{H}_0(\cdot)$.\\
 
\end{cor}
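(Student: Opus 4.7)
The plan is to reduce the corollary to Lemma D.1 of \cite{FFK12}, whose hypothesis requires only that $\overline H_0$ depend on $p$ alone. The argument then splits into three short steps.

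First, I would verify that under the hypothesis that $b, b_0, \sigma, k, b_1, \sigma_1, k_1$ are all independent of $x$, both the potential $V(y;x,p)$ defined in \eqref{V} and the perturbed generator $\mathcal L_1^{x,p}$ defined in \eqref{pert-gen} lose their $x$-dependence. The rate function $J(\mu;x,p)$ in \eqref{rate} is built entirely from $\mathcal L_1^{x,p}$, so it too is $x$-free, and the variational representation \eqref{H_0bar} immediately collapses to
\begin{equation*}
\overline H_0(x,p) \;=\; \sup_{\mu \in \mathcal P(\R)} \left\{ \int V(y;p)\,d\mu(y) - J(\mu;p) \right\} \;=:\; \overline H_0(p).
\end{equation*}

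Second, with $\overline H_0$ depending only on $p$, the Cauchy problem \eqref{u0-eqn} is a spatially homogeneous first-order Hamilton--Jacobi equation. Its unique continuous viscosity solution, whose existence and uniqueness are provided by Lemma \ref{uep-converge} together with the comparison principle, admits the Hopf--Lax representation
\begin{equation*}
u_0^h(t,x_0) \;=\; \sup_{y \in \R} \left\{ h(y) - t\,\overline L_0\!\left( \frac{y - x_0}{t} \right) \right\},
\end{equation*}
where $\overline L_0$ is the Legendre transform of $\overline H_0$; this is precisely the content of Lemma D.1 in \cite{FFK12}.

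Third, I would substitute this expression into the Bryc-type formula \eqref{rate_fn1} to identify the rate function:
\begin{equation*}
I(x;x_0,t) \;=\; \sup_{h \in C_b(\R)} \inf_{y \in \R} \left\{ h(x) - h(y) + t\,\overline L_0\!\left(\frac{y - x_0}{t}\right) \right\}.
\end{equation*}
Choosing $y = x$ in the inner expression already produces the desired upper bound. For the matching lower bound, the approximating sequence $h_n(y) := -n\,(|y - x| \wedge 1) \in C_b(\R)$ satisfies $h_n(x) = 0$ and peaks at $y = x$, forcing the inner supremum to concentrate near $x$ and yielding $u_0^{h_n}(t,x_0) \to -t\,\overline L_0((x - x_0)/t)$ in the limit.

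No step here presents a genuine obstacle, since the analytical heavy lifting was completed in the proof of Theorem \ref{LDP}. The only mild care needed is the passage to the limit in the third step, which relies on continuity of $\overline L_0$ at the relevant velocity --- a standard consequence of the convexity and superlinearity of $\overline H_0$ inherent in its variational representation \eqref{H_0bar}.
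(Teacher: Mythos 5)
Your route is in substance the same as the paper's: the paper's entire argument is (a) the observation that $x$-independence of the coefficients removes the $x$-dependence from $V$ in \eqref{V}, from $\L_1^{x,p}$ in \eqref{pert-gen}, hence from $J$ in \eqref{rate} and from $\overline{H}_0$ in \eqref{H_0bar}, and (b) a citation of Lemma D.1 of \cite{FFK12}; what you do is reconstruct that cited lemma by combining the Hopf--Lax representation of the solution of \eqref{u0-eqn} with the duality in \eqref{rate_fn1}. Your upper bound (take $y=x$) and lower bound (the sequence $h_n(y)=-n(|y-x|\wedge 1)$) are both sound; note that the passage to the limit only needs lower semicontinuity of $\overline{L}_0$, which is automatic for a Legendre transform, so your appeal to ``superlinearity'' is unnecessary --- and in fact superlinearity of $\overline{H}_0$ is false in general (e.g.\ for a pure drift, $\overline{H}_0(p)=bp$).

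Two caveats. First, your computation produces $I(x;x_0,t)=t\,\overline{L}_0\bigl((x-x_0)/t\bigr)$, whereas the corollary as printed reads $t\,\overline{L}_0\bigl((x_0-x)/t\bigr)$; for a non-even $\overline{H}_0$ these differ, and the orientation you derive is the one consistent with \eqref{rate_fn1} and with the Hopf--Lax solution of \eqref{u0-eqn} (check the pure-drift case $\overline{H}_0(p)=bp$, where the rate function must vanish at $x=x_0+bt$). The two forms coincide in the paper's worked examples, where $\overline{H}_0$ is even in $p$, but you should have flagged the mismatch with the stated formula instead of silently proving the other version. Second, the Hopf--Lax step requires convexity (and suitable finiteness/continuity) of $\overline{H}_0$ in $p$, and this is not ``inherent in'' \eqref{H_0bar} as you assert: when $\rho\neq 0$ the rate function $J(\mu;p)$ itself depends on $p$ through the drift perturbation in $\L_1^{x,p}$, so $\overline{H}_0$ is not manifestly a supremum of convex functions of $p$. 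Convexity does hold (for instance because $\overline{H}_0(p)$ arises as a limit of scaled cumulant generating functions, hence is convex by H\"older), but it needs an explicit argument; the paper implicitly delegates exactly this to the hypotheses of Lemma D.1 of \cite{FFK12}, so on this point your write-up is no less rigorous than the original, merely not self-contained where it claims to be.
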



\bigskip
\noindent The proof of Lemma \ref{uep-converge} takes up the bulk of the paper, and consists of the following steps. 
\begin{enumerate}
\item[(Sec \ref{sec:rig})]
\indent\indent $\bullet$ By taking appropriate limits of solutions $u^h_\ep$ to the Cauchy problem (\ref{PDE}) we construct upper-semicontinuous and lower-semicontinuous functions $\ol u^h$ and $\ul u^h$, respectively;\\
\indent $\bullet$ Using an indexing set $\alpha\in\Lambda$, we construct a family of operators $H_0(\cdot\, ; \alpha)$ and $H_1(\cdot\, ; \alpha)$,  in such a way that the upper-semicontinuous function $\ol u^h$ is a subsolution to the Cauchy problem for the operator $\inf_{\alpha \in \Lambda}\{H_0(\cdot\, ; \alpha)\}$, and the lower-semicontinuous function $\ul u^h$ is a supersolution to the Cauchy problem for the operator $\sup_{\alpha \in \Lambda}\{H_1(\cdot\, ; \alpha)\}$. 
\item[(Sec \ref{sec:comp})]
\indent $\bullet$ We prove a comparison principle between subsolutions of $\inf_{\alpha \in \Lambda}\{H_0(\cdot\, ; \alpha)\}$ and 
supersolutions $\sup_{\alpha \in \Lambda}\{H_1(\cdot\, ; \alpha)\}$ above;\\
\indent $\bullet$ We show that this comparison principle implies convergence of solutions $u^h_\ep$ to the Cauchy problem (\ref{PDE}) for $H_\ep$ to solutions $u^h_0$ to the Cauchy problem (\ref{u0-eqn}) for $\ol H_0$.
\end{enumerate}
\noindent The proof of Lemma \ref{exp-tight} uses the estimates obtained in the proof of Lemma \ref{uep-converge} (Section~\ref{sec:exp-tight}).\\


\subsection{Convergence of viscosity solutions of PIDEs}\label{sec:rig}

\noindent In Lemma~\ref{uep-converge} we use notions of viscosity solutions, subsolutions and supersolutions. For the standard meaning of these terms, as well as for the definition of the comparison principle, we refer the reader to Definition 4.1 in \cite{FFK12}. Their extension to partial integro-differential equations  (PIDEs) was obtained already in \cite{Alv96} and can be found in \cite{Bar08}.

The proof of convergence of $u^h_\ep$ to $u^h_0$ follows the same steps as Lemma 4.1 in \cite{FFK12} which carries over directly to viscosity solutions of PIDEs.  Because we will need to verify that the conditions there are met, we restate Lemma 4.1 from \cite{FFK12} for viscosity solutions of PIDEs.

Let $\{H_\ep\}_ {\ep>0}$ denote a family of integro-differential operators defined on the domain of functions $\bar{D}_+\cup \bar{D}_-$ where
\begin{eqnarray*}
\bar{D}_{ +} &:=&  \{  f :  f \in C^2(\R^2), \lim_{r \rightarrow \infty} \inf_{|z|>r}f(z)  =+\infty \} \\
\bar{D}_{ -} &:=&  \{ - f :  f \in C^2(\R^2), \lim_{r \rightarrow \infty} \inf_{|z|>r}f(z)  =+\infty \}.
\end{eqnarray*}
Define domains $D_+, D_-$ analogously replacing $\R^2$ by $\R$. 
Consider a class of compact sets in $\R\times\R$ defined by
\[ \Q := \{ K  \times \tilde{K}  : \mbox{compact } K, \tilde{K} \subset \subset \R \}. \]
Let $u^h_\ep$ be the viscosity solution of the Cauchy problem $\p_t u=H_\ep u$ for the above operator $H_\ep$, with initial value $h$, and define
\begin{definition}\label{relaxed-half-limits}
\begin{align*}
u^h_{\uparrow}(t,x): = 
\sup \{ \limsup_{\epsilon \rightarrow 0+} u^h_\epsilon(t_\ep, x_\epsilon, y_\epsilon): \exists& (t_\ep, x_\epsilon, y_\epsilon) \in   [0,T]\times K \times \tilde{K},\\&  (t_\ep, x_\epsilon) \rightarrow (t,x),  K \times \tilde{K} \in \Q \},\end{align*}
  \begin{align*}
 u^h_{\downarrow}(t,x):=
 \inf \{ \liminf_{\epsilon \rightarrow 0+} u^h_\epsilon(t_\ep, x_\epsilon, y_\epsilon): \exists& (t_\ep, x_\epsilon, y_\epsilon) \in  [0,T]\times K \times \tilde{K},\\& (t_\ep,  x_\epsilon) \rightarrow (t,x), K \times \tilde{K} \in \Q   \}.
\end{align*}
Define $\overline{u}^h$ to be the upper semicontinuous regularization of $u^h_{\uparrow}$, and $\underline{u}^h$ the lower semicontinuous regularization of $u^h_{\downarrow}$.
\end{definition}
 Finally, define the limiting operators (which will be first-order differential operators) $H_0, H_1$ on domains $D_+$ and $D_-$ respectively,  as follows. Let  $\Lambda$ be some indexing set, and  
\begin{align*}
 H_i(x, p; \alpha)&:\R \times \R \mapsto \R, \quad \alpha \in \Lambda, i=0,1. 
  \end{align*}
Define $H_0f(x):= H_0(x, \partial_x f(x))$, for $f\in D_+$ and $H_1f(x):=H_1(x, \p_x f(x)$, for $f\in D_-$, where 
\begin{eqnarray*}
 &H_0(x,p):=\displaystyle\inf_{\alpha\in\Lambda} H_0(x,p;\alpha), \\& H_1(x,p):=\displaystyle\sup_{\alpha\in\Lambda} H_1(x,p;\alpha).\end{eqnarray*}
 Henceforth, with slight abuse of notation, we will refer to $H_i(\cdot, \cdot)$ as operators.\\ 

Suppose the following conditions hold:

\begin{condition}[limsup convergence of operators]\label{limsupH}
For each $f_0  \in D_+$ and $\alpha \in \Lambda$, there exists $f_{0,\epsilon}   \in \bar{D}_{+}$ (which may depend on $\alpha$)  such that   
\begin{enumerate}
\item \label{cond-a}for each $c>0$, there exists $K \times \tilde{K} \in \mathcal Q$ satisfying  
\begin{equation*}
\{ (x,y) : H_\epsilon f_{0,\epsilon} (x,y) \geq -c \} \cap \{  (x,y) : f_{0,\epsilon}(x,y) \leq c   \}\subset  K \times \tilde{K};
\end{equation*}
\item \label{cond-b}for each $K \times \tilde{K} \in \mathcal Q$,
\begin{equation} \lim_{\epsilon \rightarrow 0} \sup_{(x,y) \in K \times \tilde{K}} |f_{0,\epsilon} (x,y)  - f_0(x) |  =0; \end{equation}
\item \label{cond-c}  whenever $(x_\epsilon, y_\epsilon) \in K \times \tilde{K} \in \mathcal Q$ satisfies $x_\epsilon \rightarrow x$, 
\begin{equation}
 \limsup_{\epsilon \rightarrow 0} H_\epsilon f_{0,\epsilon}(x_\epsilon, y_\epsilon)   \leq H_{0}(x,  \nabla f_0 (x); \alpha).  
 \end{equation}
\end{enumerate}
\end{condition}

\begin{condition}[liminf convergence of operators]\label{liminfH}
For each $f_1  \in D_{-}$ and $\alpha \in \Lambda$, there exists $f_{1,\epsilon} \in \bar{D}_{-}$ (which may depend on $\alpha$) such that 
\begin{enumerate}
\item \label{Kpact} for each $c>0$, there exists $K \times \tilde{K} \in \mathcal Q$ satisfying
 \begin{eqnarray*}
   \{ (x,y) :  H_\epsilon f_{1,\epsilon} (x,y)  \leq c \} 
     \cap \{  (x,y) : f_{1,\epsilon}(x,y) \geq - c   \}\subset   K \times \tilde{K} ;
 \end{eqnarray*}
\item for each $K \times \tilde{K}\in \mathcal Q$, 
\[ \lim_{\epsilon \rightarrow 0} \sup_{(x,y) \in K \times \tilde{K}} |f_1(x) - f_{1,\epsilon} (x,y) | =0;  \]
\item whenever $(x_\epsilon, y_\epsilon) \in  K \times \tilde{K} \in \mathcal Q$, and $x_\epsilon \rightarrow x$, 
\begin{align*}
\liminf_{\epsilon \rightarrow 0} H_\epsilon   f_{1,\epsilon}(x_\epsilon, y_\epsilon) \geq H_{1}(x,  \nabla f_1 (x); \alpha).  \end{align*}
\end{enumerate}
\end{condition}

\noindent In this case the following convergence results for $u^h_\ep$ as $\ep\to 0$ hold.

\begin{lemma}\label{rig-conv}
Suppose the viscosity solutions $u^h_\ep$ to the partial integro-differential equation
\begin{equation}\label{PIDE1}
\p_tu=H_\ep u, \quad u(0,x)=h(x) \nonumber
\end{equation}  
are uniformly bounded, $\sup_{\ep>0}||u^h_\ep||<\infty$. Then, under Condition \ref{limsupH}, $\ou^h$ is a subsolution of 
\begin{equation}\label{sub}
\p_t u(t,x)\leq H_0 (x,\nabla u(t,x))\end{equation}
 and, under Condition \ref{liminfH}, $\uu^h$ is a supersolution of 
 \begin{equation}
\label{super}
\p_t u(t,x)\geq H_1 (x,\nabla u(t,x)).\end{equation}
with the same initial conditions. 
\end{lemma}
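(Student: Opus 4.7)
The strategy is the Barles--Perthame method of half-relaxed limits, adapted to partial integro-differential equations. I will describe the argument for the subsolution statement; the supersolution statement is entirely analogous, with $\ou^h$, $D_+$, Condition~\ref{limsupH} replaced by $\uu^h$, $D_-$, Condition~\ref{liminfH}, and all inequalities reversed.

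Let $\varphi \in C^{1,2}((0,T)\times\R)$ and suppose $(t_0, x_0) \in (0,T)\times\R$ is a strict local maximum of $\ou^h - \varphi$. Fix $\alpha \in \Lambda$; the goal is to show $\p_t \varphi(t_0,x_0) \le H_0(x_0,\nabla\varphi(t_0,x_0);\alpha)$, after which taking the infimum over $\alpha$ gives \eqref{sub}. Since $H_\ep$ acts on functions of $(x,y)$ but the limit operator acts on functions of $x$ alone, we first promote $\varphi$ to a two-variable test function. Choose $f_0 \in D_+$ with $f_0 \equiv \varphi(t_0,\cdot)$ on a neighbourhood of $x_0$ (and $f_0(x)\to\infty$ at infinity, ensuring coercivity), let $f_{0,\ep} \in \bar{D}_+$ be the perturbation furnished by Condition~\ref{limsupH} applied to this $f_0$, and set
\[
  \Phi_\ep(t,x,y) := \varphi(t,x) + f_{0,\ep}(x,y) - f_0(x).
\]
By Condition~\ref{limsupH}(\ref{cond-b}), $\Phi_\ep(t,x,y) \to \varphi(t,x)$ uniformly on each $[0,T]\times K\times\tilde K \in [0,T]\times\Q$, and $\Phi_\ep$ retains coercivity in $(x,y)$.

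Let $(t_\ep, x_\ep, y_\ep)$ be a global maximum point of $u^h_\ep - \Phi_\ep$ on $[0,T]\times \R\times\R$. The crucial technical step is to pin this maximizing sequence into a fixed compact set. Using the uniform bound $\sup_\ep \|u^h_\ep\|<\infty$, the maximum property gives $f_{0,\ep}(x_\ep,y_\ep) \le C$ for a constant $C$ independent of $\ep$; the viscosity subsolution property of $u^h_\ep$ at the maximum together with boundedness of $\p_t\varphi$ yields $H_\ep \Phi_\ep(x_\ep,y_\ep) \ge -C'$; Condition~\ref{limsupH}(\ref{cond-a}) then forces $(x_\ep,y_\ep) \in K \times \tilde K$ for some $K\times\tilde K\in\Q$. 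Extracting a subsequence, $(t_\ep,x_\ep,y_\ep)\to(t^\ast,x^\ast,y^\ast)$, and comparison of $u^h_\ep-\Phi_\ep$ at $(t_\ep,x_\ep,y_\ep)$ with its value at a point near $(t_0,x_0)$, combined with the definition of $u^h_\uparrow$ and the strict-maximum property of $(t_0,x_0)$, identifies $(t^\ast,x^\ast)=(t_0,x_0)$.

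Finally, the viscosity subsolution inequality for $u^h_\ep$ at $(t_\ep, x_\ep, y_\ep)$ reads
\[
  \p_t \varphi(t_\ep, x_\ep) \;\le\; H_\ep \Phi_\ep(x_\ep, y_\ep).
\]
Taking $\limsup_{\ep\to 0}$ on the right and invoking Condition~\ref{limsupH}(\ref{cond-c}) gives $\p_t \varphi(t_0,x_0) \le H_0(x_0,\nabla f_0(x_0);\alpha) = H_0(x_0,\nabla\varphi(t_0,x_0);\alpha)$, since $f_0 \equiv \varphi(t_0,\cdot)$ near $x_0$. Infimizing over $\alpha$ proves \eqref{sub}. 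The main obstacle is the compactness step: for local PDEs one may simply localize the test function, but the nonlocal integrals in $H_\ep$ involve $\Phi_\ep$ at points $y+k_1(x,y,z)$ far from $y$, so localization is unavailable. The coercivity built into $\bar D_\pm$, together with the sublevel-set inclusion in Conditions~\ref{limsupH}(\ref{cond-a})/\ref{liminfH}(\ref{Kpact}), is precisely what replaces localization and provides the needed a-priori compact confinement of the maximizing sequence.
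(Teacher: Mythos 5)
Your overall scheme is the same one the paper relies on: the paper omits the proof and defers to Lemma 4.1 of \cite{FFK12}, whose argument is exactly the Barles--Perthame half-relaxed-limit method you describe, with Condition \ref{limsupH}.\ref{cond-a} supplying the compact confinement of the maximizing points in $(x,y)$, Condition \ref{limsupH}.\ref{cond-b} the uniform convergence of the perturbed test functions, and Condition \ref{limsupH}.\ref{cond-c} the one-sided convergence of the operators. So in spirit your proposal reconstructs the intended proof.

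However, two steps in your execution do not hold as written. First, the claim that $\Phi_\ep(t,x,y)=\varphi(t,x)+f_{0,\ep}(x,y)-f_0(x)$ ``retains coercivity in $(x,y)$'' is unjustified: the abstract conditions only control $f_{0,\ep}-f_0$ on compact sets (and in the paper's concrete construction $f_{0,\ep}-f_0=\ep g(y)$, which carries no growth in $x$ at all), so for a bounded test function $\varphi$ the function $u^h_\ep-\Phi_\ep$ need not attain a global maximum, and your maximizing points $(t_\ep,x_\ep,y_\ep)$ need not exist. Second, Conditions \ref{limsupH}.\ref{cond-a} and \ref{limsupH}.\ref{cond-c} are statements about $f_{0,\ep}$ and $H_\ep f_{0,\ep}$, not about $\Phi_\ep$ and $H_\ep\Phi_\ep$; since $H_\ep$ is nonlinear and contains terms of order $\ep^{-1}$ and $\ep^{-2}$ (e.g.\ the cross term $\ep^{-1}\rho\s\s_1\p_x u\,\p_y u$), replacing $f_{0,\ep}$ by $\varphi(t_\ep,\cdot)+f_{0,\ep}-f_0$ requires an explicit estimate showing the discrepancy vanishes along the confined sequence; you assert the conclusion without this check. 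Both issues disappear if you follow the route of \cite{FFK12}: the sub/supersolution property of the limit equation is formulated with respect to test functions built from the coercive classes $D_+$ and $D_-$ (plus a smooth function of $t$), and the $\ep$-level test function is taken to be $f_{0,\ep}$ itself (plus the same time part), so that the maxima exist by coercivity of $\bar D_\pm$, and Conditions \ref{limsupH}.\ref{cond-a}--\ref{cond-c} apply verbatim; passing from this restricted test-function class to the conclusion is then part of the definition used there, not an extra localization step with arbitrary $C^{1,2}$ test functions.
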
 
\noindent As the proof is the same as the proof of Lemma 4.1 in \cite{FFK12} we omit it here.
We do need to check Conditions \ref{limsupH} and \ref{liminfH} hold for our problem. This involves identifying the right indexing set $\Lambda$, the family of operators $H_0(\cdot\,;\alpha)$ and $H_1(\cdot\,;\alpha)$, and the appropriate  test functions $f_{0,\ep}$ and $f_{1,\ep}$, for each given $f_0$ and $f_1$, respectively.

\bigskip
\noindent {\bf Verifying Condition \ref{limsupH}:}\label{sec:verify}
As in \cite{FFK12}, we let 
\[\Lambda:=\{(\xi, \theta):\xi\in C^2_c(\R), 0<\theta<1\}\]
 and we define the sequence of operators $H_\ep$ as in (\ref{Hep}) on the domain  
\[D_+:=\{f\in C^2(\R):f(x)=\phi(x)+\gamma \log(1+x^2);\phi\in C^2_c(\R),\gamma>0\}.\]  

\noindent Define the family of operators $H_0(x,p;\xi,\theta)$ for $(\xi,\theta)\in\Lambda$ by
\begin{equation}\label{H0}
\begin{split}H_0(x,p;\xi,\theta):=\sup_{y\in\R}&\{b(x,y)p+\frac1 2 \s^2(x,y)p^2+\int\l(e^{pk(x,y,z)}-1-pk(x,y,z) \r)\nu_1(z)dz\\
&+(1-\theta)e^{-\xi}\L_1^{x,p}e^{\xi}(y)+\theta e^{-\zeta}\L_1^{x,p}e^{\zeta}(y)\}.\end{split}
\end{equation}

\noindent For any $f\in D_+$ and $(\xi,\theta)\in\Lambda$ define a sequence of functions
\[f_{0,\ep}(x,y):=f(x)+\ep g(y),\; \mbox{ where }\; g(y):=(1-\theta)\xi(y)+\theta \zeta(y),\] 
and $\zeta$ is the Lyapunov function on $\R$ satisfying Assumption~\ref{assmpn:Lyap}.
Then, 
\begin{equation}\label{cond1}\begin{split}
H_\ep f_{0,\ep}(x,y)&=b(x,y)\p_xf(x)+\frac1 2 \s^2(x,y)(\p_xf(x))^2+\ep \l(b_0(x,y)\p_xf(x)+\frac 1 2\s^2(x,y)\p^2_{xx}f(x)\r)\\
&\quad+\int\l(e^{\frac{f(x+\ep k(x,y,z),y)-f(x,y)}{\ep}}-1-k(x,y,z)\p_xf(x)\r)\nu_1(z)dz+e^{-g}\L_1^{x,\p_xf(x)}e^{g}(y)\\
&\leq b(x,y)\p_xf(x)+\frac1 2 \s^2(x,y)(\p_xf(x))^2+\ep\l(b_0(x,y)\p_xf(x)+\frac 1 2\s^2(x,y)\p^2_{xx}f(x)\r)\\
&\quad+\int\l(e^{\frac{f(x+\ep k(x,y,z),y)-f(x,y)}{\ep}}-1 -k(x,y,z)\p_xf(x)\r)\nu_1(z)dz\\
&\quad +(1-\theta)e^{-\xi}\L_1^{x,\p_xf(x)}e^{\xi}(y)+\theta e^{-\zeta}\L_1^{x,\p_xf(x)}e^{\zeta}(y).
\end{split}\end{equation}
so, for any sequence $(x_\ep,y_\ep)$ such that $x_\ep\to x$
\[\limsup_{\ep\to 0}H_\ep f_{0,\ep}(x_\ep,y_\ep)\leq H_0(x,\p_xf(x);\xi,\theta),\]  
thus verifying Condition \ref{limsupH}.\ref{cond-c} holds.

\noindent By choice of $D_+$, $f\in D_+$ has compact level sets in $\R$. Also note that $||\p_xf||+||\p^2_{xx}f||<\infty$. Assumption~\ref{assmpn:Lyap} ensures that $- H_\ep f_{0,\ep}(x,\cdot)$ has compact level sets for all $x$ in compact sets.  This proves  Condition \ref{limsupH}.\ref{cond-a} holds.  Condition \ref{limsupH}.\ref{cond-b} is obvious by choice of functions $f_{0,\ep}$.

\bigskip
\noindent {\bf Verifying Condition \ref{liminfH}}:
is exactly the same as verifying Condition \ref{limsupH}, except that the sequence of operators $H_\ep$ are now defined on the domain 
\[D_-:=\{f\in C^2(\R):f(x)=\phi(x)-\gamma \log(1+x^2);\phi\in C^2_c(\R),\gamma>0\};\]  
the family of operators $H_1(x,p;\xi,\theta)$ for $(\xi,\theta)\in\Lambda$ is defined by
 \begin{equation}\label{H1}\begin{split}
H_1(x,p;\xi,\theta):=\inf_{y\in\R}&\{b(x,y)p+\frac1 2 \s^2(x,y)p^2+\int\l(e^{pk(x,y,z)}-1-pk(x,y,z)\r)\nu_1(z)dz\\
&+(1+\theta)e^{-\xi}\L_1^{x,p}e^{\xi}(y)-\theta e^{-\zeta}\L_1^{x,p}e^{\zeta}(y)\};\end{split}
\end{equation}
and for any $f\in D_-$ and $\xi,\theta\in\Lambda$ the sequence $f_{1,\ep}$ is defined as 
\[f_{1,\ep}(x,y):=f(x)+\ep g(y),\; \mbox{ for }g(y):=(1+\theta)\xi(y)-\theta \zeta(y),\] 
so that for any sequence $(x_\ep,y_\ep)$ such that $x_\ep\to x$ we now have
\[\liminf_{\ep\to 0}H_\ep f_{1,\ep}(x_\ep,y_\ep)\geq H_1(x,\p_xf(x);\xi,\theta)\]  
verifies Condition \ref{liminfH}.\ref{cond-c} holds. Conditions \ref{liminfH}.\ref{cond-a} and \ref{liminfH}.\ref{cond-b} hold by the same arguments as above.

\subsection{Comparison Principle}\label{sec:comp}

\noindent The rest of the claim of Lemma~\ref{uep-converge} requires proving uniqueness of solutions to $\p_t u=\ol H_0 u$, with initial value $h$. This can be verified using the comparison principle on the subsolutions and supersolutions of the constructed limiting operators $H_0$ and $H_1$, and the variational representation of $\ol H_0$ from \eqref{H_0bar}. We use the following Lemma 4.2 from \cite{FFK12}.

\begin{lemma}\label{lem-comp}
Let $\ul u^h$ and $\ol u^h$ be defined as in Definition~\ref{relaxed-half-limits}. If a comparison principle between subsolutions of \eqref{sub} and supersolutions of \eqref{super} holds, that is, if every subsolution $v_1$ of \eqref{sub} and every supersolution $v_2$ of \eqref{super} satisfy $v_1\leq v_2$, then $\ul u^h=\ol u^h$ and $u^h_\ep(t,x,y)\to u^h_0(t,x)$, where $u^h_0:=\ul u^h=\ol u^h$, as $\ep\to 0$, uniformly over compact subsets of $[0,T]\times\R\times\R$.
\end{lemma}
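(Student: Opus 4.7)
The plan is straightforward once the ingredients from the preceding subsection are assembled: sandwich the relaxed half-limits $\underline{u}^h$ and $\overline{u}^h$ between the pointwise inequality $\underline{u}^h\le\overline{u}^h$ (immediate from their definitions) and the reverse inequality $\overline{u}^h\le\underline{u}^h$ supplied by the hypothesized comparison principle, then upgrade the resulting pointwise equality to locally uniform convergence of $u^h_\ep$ via a compactness argument.

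First, I would record $\underline{u}^h\le\overline{u}^h$. Along any admissible sequence $(t_\ep,x_\ep,y_\ep)$ with $(t_\ep,x_\ep)\to(t,x)$ one has the trivial bound $\liminf_\ep u^h_\ep(t_\ep,x_\ep,y_\ep)\le\limsup_\ep u^h_\ep(t_\ep,x_\ep,y_\ep)$; taking the infimum on the left and the supremum on the right over all such sequences yields $u^h_\downarrow\le u^h_\uparrow$, and the ordering survives passage to the lower/upper semicontinuous regularizations.

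Second, I would invoke Lemma~\ref{rig-conv}, whose Conditions \ref{limsupH} and \ref{liminfH} are verified in Section~\ref{sec:verify} with the operators $H_0,H_1$ of \eqref{H0} and \eqref{H1}, to conclude that $\overline{u}^h$ is a viscosity subsolution of \eqref{sub} and $\underline{u}^h$ is a viscosity supersolution of \eqref{super}, both matching the initial datum $h$ at $t=0$. The latter follows from $u^h_\ep(0,x,y)=h(x)$ for every $\ep$ and every $y$ together with the continuity of $h$. The hypothesized comparison principle then gives $\overline{u}^h\le\underline{u}^h$; combined with Step~1 this forces equality. Call the common function $u^h_0$; it is continuous (being both upper- and lower-semicontinuous) and by construction independent of $y$.

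Finally, for the uniform-convergence claim I would argue by contradiction. If convergence failed on some compact set $[0,T]\times K\times \tilde K$, there would exist sequences $(t_\ep,x_\ep,y_\ep)$ in it and some $\delta>0$ with $|u^h_\ep(t_\ep,x_\ep,y_\ep)-u^h_0(t_\ep,x_\ep)|\ge\delta$. Passing to a subsequence with $(t_\ep,x_\ep)\to(t,x)$ by compactness, the definitions of the relaxed half-limits yield
\[
u^h_0(t,x)=\underline{u}^h(t,x)\le\liminf_\ep u^h_\ep(t_\ep,x_\ep,y_\ep)\le\limsup_\ep u^h_\ep(t_\ep,x_\ep,y_\ep)\le\overline{u}^h(t,x)=u^h_0(t,x),
\]
so $u^h_\ep(t_\ep,x_\ep,y_\ep)\to u^h_0(t,x)$, while continuity of $u^h_0$ gives $u^h_0(t_\ep,x_\ep)\to u^h_0(t,x)$, contradicting the $\delta$-gap. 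The main obstacle I expect is the initial-condition matching needed for the comparison principle to apply cleanly: one must check that the upper/lower semicontinuous envelopes of $u^h_\ep$ agree with $h$ on $\{t=0\}$ in the appropriate viscosity sense, which requires some care because the defining sequences for the half-limits permit $t_\ep\to 0$, so bounds on $u^h_\ep(t_\ep,\cdot,\cdot)$ for small $t_\ep$ must be extracted from the representation \eqref{uep} together with the uniform continuity of $h\in C_b(\mathbb R)$. Once this is handled, the remaining steps are routine.
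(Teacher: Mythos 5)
Your proposal is correct and follows essentially the same route as the paper: the ordering $\ul u^h\le \ol u^h$ holds by construction, the hypothesized comparison principle (applied to the sub/supersolution properties furnished by Lemma~\ref{rig-conv}) gives the reverse inequality, and equality of the two semicontinuous envelopes yields locally uniform convergence. The paper simply asserts this last step, whereas you spell out the standard compactness/contradiction argument (and flag the initial-condition matching, which is already packaged into Lemma~\ref{rig-conv}); no substantive difference in approach.
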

\begin{proof}
The comparison principle gives $\ol u^h\leq \ul u^h$, while by construction we  have $\ul u^h\leq \ol u^h$. This gives uniform convergence of $u^h_\ep\to u_0:=\ol u^h=\ul u^h$ over compact subsets of $[0,T]\times\R\times\R$.
\end{proof}

\noindent We next prove the comparison principle for subsolutions of \eqref{sub} and supersolutions of \eqref{super}, that is  every sub solution of 
\[\p_tu(t,x)\leq H_0(x,p):=\inf_{0<\theta<1,\xi\in C^2_c(\R)}H_0(x,p;\xi, \theta),\]
where $H_0$ is as defined in \eqref{H0}, is less than or equal to every super solution of 
\[\p_tu(t,x)\geq  H_1(x,p):=\sup_{0<\theta<1,\xi\in C^2_c(\R)}H_1(x,p;\xi, \theta)\] where $H_1$ is as defined in \eqref{H1}.
We follow the steps in Section 5.2 in \cite{FFK12} with some modifications. The key step is proving 

\medskip
\noindent{\bf Operator Inequality:}\\
\begin{equation}\label{comparison-ineq}
\inf_{0<\theta<1,\xi\in C^2_c(\R)}H_0(x,p;\theta,\xi)\leq \ol H_0(x,p)\leq \sup_{0<\theta<1,\xi\in C^2_c(\R)}H_1(x,p;\theta,\xi),
\end{equation}
 where $\ol H_0(x,p)$ is as defined in \eqref{H_0bar}.

\medskip \noindent Recall the definition of the rate function $J$ from \eqref{rate} and 
variational representation of $\ol H_0$ as
\[\ol H_0(x,p)=\sup_{\mu\in\mathcal{P}(\R)} \l( \int V(y; x,p)d\mu(y)-J(\mu; x,p)\r).\] 
Following steps of Lemma 11.35 of \cite{FK06} (which relies on Assumption~\ref{assmpn:Ydist}) we get that
\[\inf_{0<\theta<1,\xi\in C^2_c(\R)}H_0(x,p:\theta,\xi)\leq \ol H_0(x,p).\]

\noindent From the proof of Lemma B.10 in \cite{FK06}, we have
\[\sup_{0<\theta<1,\xi\in C^2_c(\R)}H_1(x,p:\theta,\xi)\geq \inf_{\mu\in \mathcal P(\R)}\liminf_{t\to\infty} t^{-1} \ln E^\mu\l[e^{\int_0^t V(Y_s^{x,p};x,p)ds}\r].\]
Thus, we need to show that, irrespective of the initial distribution, 
\[\liminf_{t\to\infty} t^{-1} \ln E\l[e^{\int_0^t V(Y_s^{x,p};x,p)ds}\r]\geq \ol H_0(x,p).\]
The proof of this claim depends on the Assumption~\ref{assmpn:Ydist}. We define the occupation measures of the $Y^{x,p}$ process: \[\mu^{x,p}_t(\cdot):=\frac{1}{t}\int_0^t1_{Y_s^{x,p}}(\cdot)ds.\] 
Recall that $\mathcal P(\R)$ is a separable metric space under the Prokhorov metric and that weak convergence of measures is equivalent to convergence in the Prokhorov metric.  Let $\mathcal Q_{t,y_0}$ denote the probability measure on $\mathcal P(\R)$ induced by the occupation measure $\mu_t$ of $Y$ when $Y_0=y_0$. In other words, for $A\in \mathcal B(\mathcal P(\R))$ (the borel sigma-algebra on $\mathcal P(\R)$),
\[\mathcal Q_{t,y_0}(A)=P(\mu_t(\cdot)\in A|Y_0=y_0).\]
\begin{lemma}
 $\inf_{\mu\in \mathcal P(\R)}\liminf_{t\to\infty} t^{-1} \ln E^\mu\l[e^{\int_0^t V(Y_s^{x,p};x,p)ds}\r]\geq \ol H_0(x,p)$.
\end{lemma}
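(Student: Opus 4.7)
The lemma is essentially a one-sided Varadhan lemma combined with a Donsker--Varadhan large deviation principle (LDP) for the occupation measures $\mu^{x,p}_t(\cdot) = \frac{1}{t}\int_0^t 1_{Y^{x,p}_s}(\cdot)\,ds$. The plan is to show, uniformly in the initial distribution $\mu$, the pointwise lower bound
\[
\liminf_{t\to\infty} \frac{1}{t} \ln E^\mu\l[e^{\int_0^t V(Y^{x,p}_s;x,p)\,ds}\r]
\;\ge\; \int V(y;x,p)\,d\nu(y) - J(\nu;x,p)
\]
for each $\nu \in \mathcal P(\R)$ with $J(\nu;x,p)<\infty$, and then to take the supremum over $\nu$ to recover $\ol H_0(x,p)$ via the variational formula \eqref{H_0bar}.

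First I would invoke the Donsker--Varadhan LDP lower bound. Under Assumption~\ref{assmpn:Ydist}, the transition kernel $p^{x,p}_1(y_0,\cdot)$ has a strictly positive density with respect to the reference measure $\alpha$, and $Y^{x,p}$ is Feller continuous. These are the standard hypotheses of the Donsker--Varadhan theorem (see \cite{DV75} and Appendix D of \cite{FK06}), which yields
\[
\liminf_{t\to\infty} \frac{1}{t} \ln P^\mu\l(\mu^{x,p}_t \in G\r) \;\ge\; -\inf_{\nu \in G} J(\nu;x,p)
\]
for every Prokhorov-open $G \subset \mathcal P(\R)$, with the bound independent of $\mu$. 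The uniformity in the initial distribution is the content of the positivity hypothesis: after one unit of time, the law of $Y^{x,p}_1$ dominates a fixed multiple of $\alpha$ on compact sets, so the large-deviation behaviour of the empirical measure on $[1,t]$ inherits a lower bound that does not depend on the starting law.

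Next I would execute the Varadhan lower bound. Fix $\nu$ with $J(\nu;x,p)<\infty$, and assume first that $V(\cdot;x,p)$ is bounded. For any $\delta>0$ choose a Prokhorov-ball $B_\delta(\nu)$ on which $\int V\,d\nu' \ge \int V\,d\nu - \delta$; restricting the expectation to the event $\{\mu^{x,p}_t \in B_\delta(\nu)\}$ gives
\[
E^\mu\l[e^{t\int V\,d\mu^{x,p}_t}\r] \;\ge\; e^{t(\int V\,d\nu - \delta)}\, P^\mu\l(\mu^{x,p}_t \in B_\delta(\nu)\r),
\]
and applying the LDP lower bound together with $\nu \in B_\delta(\nu)$ yields
\[
\liminf_{t\to\infty}\frac{1}{t}\ln E^\mu\l[e^{t\int V\,d\mu^{x,p}_t}\r] \;\ge\; \int V\,d\nu - \delta - J(\nu;x,p);
\]
sending $\delta\to 0$ produces the pointwise bound. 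To accommodate unbounded $V$, I would truncate from above by $V_M := V\wedge M$; since Assumption~\ref{assmpn:growth} supplies $V \ge K_{x,p}$, the truncated $V_M$ is bounded, the bounded case applies with $V$ replaced by $V_M$, and monotone convergence $\int V_M\,d\nu \uparrow \int V\,d\nu$ as $M\to\infty$ completes the argument.

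Taking the supremum over $\nu$ and identifying the right-hand side with $\ol H_0(x,p)$ via \eqref{H_0bar} finishes the proof, and because the bound is uniform in $\mu$ it survives the infimum over initial distributions. The principal obstacle is precisely this uniformity: one must verify that the Donsker--Varadhan lower bound does not degrade as $\mu$ ranges over all of $\mathcal P(\R)$, which is exactly what the strict positivity of $p^{x,p}_1(y_0,\cdot)$ in Assumption~\ref{assmpn:Ydist} is designed to supply. The secondary technical point, the possible unboundedness of $V$, is controlled by the one-sided bound $V \ge K_{x,p}$ from Assumption~\ref{assmpn:growth} together with non-negativity of $J$, which together justify the monotone limit $M\to\infty$.
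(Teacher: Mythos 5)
Your proposal follows essentially the same route as the paper: a Donsker--Varadhan lower bound for the occupation measures $\mu^{x,p}_t$, a Varadhan-type restriction of the expectation to a small Prokhorov ball around a fixed $\nu$, truncation of $V$ (bounded below by Assumption~\ref{assmpn:growth}) with monotone convergence, and finally the variational identity \eqref{H_0bar}. The one place you over-reach is the claim that the occupation-measure LDP lower bound holds ``independent of $\mu$'' over all initial laws: Assumption~\ref{assmpn:Ydist} gives positivity of $p^{x,p}_1(y_0,\cdot)$ for each $y_0$, but no lower bound on this density that is uniform in $y_0$ over all of $\R$, so the assertion that the law of $Y^{x,p}_1$ ``dominates a fixed multiple of $\alpha$'' is not justified as stated. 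This is harmless for the lemma, since the infimum over $\mu$ only requires the bound for each fixed $\mu$, and the paper makes exactly this precise: it fixes the initial law $\nu_1$, chooses a compact $K$ with $\nu_1(K)>0$, restricts the expectation to $\{Y_0\in K\}$ (paying only $\tfrac1t\log\nu_1(K)\to 0$), and invokes Theorem 5.5 of \cite{DV83}, which yields the lower bound uniformly over starting points $y_0\in K$ only. If you replace your global uniformity claim by this restriction-to-a-compact step, your argument coincides with the paper's; your treatment of unbounded $V$ by truncating from above is the same device the paper uses, via the lower semicontinuity of $\mu\mapsto\int V\,d\mu$ proved in Lemma~\ref{sec:appendix-phi-lsc}.
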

\begin{proof}
Define $\phi:\mathcal P(\R)\to \R$ by $\phi(\mu)=\int V(y;x,p)\mu(dy)$. Take $\tilde{\nu_1}\in\mathcal  P(\R)$,  and let $B(\tilde{\nu_1}, r)$ denote the open ball in $\mathcal P(\R)$ of radius $r$, centered at $\tilde{\nu_1}$. Fix  $\nu_1\in \mathcal P(\R)$, then there exists a compact set $K$ in $\R$ such that $\nu_1(K)>0$. 
The key ingredient in the proof is the uniform LDP lower bound for the occupation measures:
\begin{equation}\label{unif-LDP-LB} \liminf_{t\to\infty}\frac 1 t \log\left[ \inf_{y_0\in K}
\mathcal Q_{t,y_0}(B(\tilde{\nu_1}, r))\right]\geq  -J(\tilde{\nu_1};x,p).\end{equation} This is obtained from Theorem 5.5 in \cite{DV83} under Assumption~\ref{assmpn:Ydist}. While the statement of Theorem 5.5 in \cite{DV83} is in terms of a process level LDP, by the contraction principle it ensures the uniform LDP lower bound  \eqref{unif-LDP-LB}  for the occupation measures $\mu_t^{x,p}$.

We now compute
\begin{align*}
\liminf_{t\to\infty}\frac 1 t \log E^\nu_1\left[e^{\int_0^tV(Y_s^{x,p};x,p)ds}\right]&=\liminf_{t\to\infty}\frac 1 t \log E^\nu_1\left[e^{t\phi(\mu_t^{x,p})}\right]\\
&\geq \liminf_{t\to\infty}\frac 1 t \log E^\nu_1\left[e^{t\phi(\mu_t^{x,p})}1_{\{Y_0\in K\}}\right]\\
&\geq \liminf_{t\to\infty}\frac 1 t \log \left[\inf_{y_0\in K}E^{y_0}\left(e^{t\phi(\mu_t^{x,p})}\right)\right]+\liminf_{t\to\infty}\frac 1 t\log \nu_1(K)\\
&=\liminf_{t\to\infty}\frac 1 t \log  \left[ \inf_{y_0\in K}\int_{\mu\in \mathcal P(\R)}e^{t\phi(\mu)}d\mathcal Q_{t,y_0}(\mu)\right]\\
&\geq \liminf_{t\to\infty}\frac 1 t \log  \left[ \inf_{y_0\in K}\int_{\mu\in B(\tilde{\nu_1}, r)}e^{t\phi(\mu)}d\mathcal Q_{t,y_0}(\mu)\right]\\
&\geq  \inf_{\mu\in B(\tilde{\nu_1}, r)}\phi(\mu)  + \liminf_{t\to\infty}\frac 1 t \log\left[ \inf_{y_0\in K}
\mathcal Q_{t,y_0}(B(\tilde{\nu_1}, r))\right]\\
&\geq \inf_{\mu\in B(\tilde{\nu_1}, r)}\phi(\mu)  -J(\tilde{\nu_1};x,p)
\end{align*}
by \eqref{unif-LDP-LB}. 
By Lemma \ref{sec:appendix-phi-lsc} (see Appendix), $\phi$ is a lower semi-continuous function, and so  $\phi(\tilde{\nu_1})\leq \lim_{r\to 0}\inf_{\mu\in B(\tilde{\nu_1}, r)}\phi(\mu)$. Thus taking limit as $r\to 0$  we get

\[\liminf_{t\to\infty}\frac 1 t \log E^\nu_1\left[e^{\int_0^tV(Y_s^{x,p};x,p)ds}\right] \geq \phi(\tilde{\nu_1})-J(\tilde{\nu_1})\] (note that since $V$ is bounded below, $\phi(\mu)>\, - \infty$, and so $\phi(\tilde{\nu_1})-J(\tilde{\nu_1};x,p)$ is well defined and not $-\infty+\infty$). 
Since $\tilde{\nu_1}$ is arbitrary, we get
\[\liminf_{t\to\infty}\frac 1 t \log E^\nu_1\left[e^{\int_0^tV(Y_s^{x,p};x,p)ds}\right] \geq \sup_{\tilde{\nu_1}\in \mathcal P(\R)}\{\phi(\tilde{\nu_1})-J(\tilde{\nu_1};x,p)\}.\] This holds for every $\nu_1\in \mathcal P(\R)$ and so
\[\inf_{\nu_1\in \mathcal P(\R)}\liminf_{t\to\infty}\frac 1 t \log E^\nu_1\left[e^{\int_0^tV(Y_s^{x,p};x,p)ds}\right] \geq \sup_{\tilde{\nu_1}\in \mathcal P(\R)}\{\phi(\tilde{\nu_1})-J(\tilde{\nu_1};x,p)\}.\]
This concludes the proof of the {\bf Operator Inequality} (\ref{comparison-ineq}).
\end{proof}

\begin{rem}
In the special case $Y^{x,p}$ also has a reversible invariant measure $\pi^p(x,\cdot)$ we could also follow the arguments for Lemma 5.4 in \cite{FFK12} using the Dirichlet form representation of $J$ \eqref{Dirichlet}.  

\end{rem}

\begin{proof}[\bf Proof of Lemma \ref{uep-converge}]
By Lemma \ref{rig-conv} and Operator Inequality (\ref{comparison-ineq}), it follows that $\ol u^h$ is a subsolution and $\ul u^h$  a supersolution of the Cauchy problem \eqref{u0-eqn}: $\p_tu(t,x)=\ol H_0(x,\p_x u(t,x))$ with $u(0,x)=h(x)$. If the comparison principle holds for the Cauchy problem \eqref{u0-eqn}, 
then Lemma~\ref{lem-comp} gives us  $\ul u^h=\ol u^h$ and that $u^h_\ep\to u^h_0\equiv \ul u^h=\ol u^h$  uniformly over compact subsets of $[0,T]\times\R\times\R$. 
\end{proof}

\subsection{Exponential tightness}\label{sec:exp-tight}
\begin{proof}[\bf Proof of Lemma \ref{exp-tight}] We prove exponential tightness using the convergence of $H_\ep$ and appealing to supermartingale arguments (see Section 4.5 of \cite{FK06}).

\medskip
\noindent Let $f(x):=\ln(1+x^2)$, so $f(x)\to\infty$ as $|x|\to\infty$, and also $||f^\prime||+||f^{\prime\prime}||<\infty$.  Define $f_\ep(x,y):=f(x)+\ep \zeta(y)$ where $\zeta$ is the positive Lyapunov function satisfying Assumption \ref{assmpn:Lyap} (with $\theta=1$).  Then, for any $c>0$, there exists a compact $K_c\subset \R$ such that $f_\ep(x,y)>c$, $\forall y\in \R$, $\forall x\notin K_c$. 

\medskip
\noindent Observe that by \eqref{cond1} (with $\theta=1$)
\begin{align*}H_\ep f_\ep(x,y)&=\ep e^{-f_\ep/\ep}\L_\ep e^{f_\ep/\ep}\\
&\leq  b(x,y)\p_xf(x)+\frac1 2 \s^2(x,y)(\p_xf(x))^2+\ep\l(b_0(x,y)\p_xf(x)+\frac 1 2\s^2(x,y)\p^2_{xx}f(x)\r)\\
&+\int(e^{\frac{f(x+\ep k(x,y,z),y)-f(x,y)}{\ep}}-1-pk(x,y,z))\nu_1(z)dz + e^{-\zeta}\L_1^{x,\p_xf(x)}e^{\zeta}(y).\end{align*}
By choice of $f$, growth conditions on the coefficients and Assumption~\ref{assmpn:Lyap},  we get there exists $C>0$ such that \[\sup_{x\in\R, y\in\R}H_\ep f_\ep(x,y)\leq C<\infty,\; \forall\ep>0.\]  

\noindent Since\, $e^{\l(f_\ep(X_{\ep,t},Y_{\ep,t})-f_\ep(X_{\ep,0},Y_{\ep,0})\r)/\ep-\int_0^t H_\ep f_\ep(X_{\ep,s},Y_{\ep,s})ds}$ \,is a non-negative local martingale, by optional stopping 
\begin{align*}
P&(X_{\ep,t}\notin K_c)e^{(c-f_\ep(x_0,y_0)-tC)/\ep}\\&\leq E\l[\exp\l\{\frac{f_\ep(X_{\ep,t}, Y_{\ep,t})}{\ep}-\frac{f_\ep(x_0,y_0)}{\ep}-\int_0^tH_\ep f_\ep(X_{\ep,s}, Y_{\ep,s})ds  \r\}\r]\leq 1.
\end{align*}
Therefore for each $c>0$
\[\ep\ln P(X_{\ep,t}\notin K_c)\leq tC-f_\ep(x_0,y_0)-c\]
As $C$ is fixed and independent of $c$ (which we can choose),  $\{X_{\ep,t}\}_{\ep>0}$ is exponentially tight. 

\medskip
\begin{rem}A similar argument can be used to verify the exponential compact containment condition in Corollary 4.17 in \cite{FK06}, which would give us $\{X_{\ep,\cdot}\}_{\ep>0}$ is exponentially tight.\end{rem}

\end{proof}

\section{Examples}\label{sec:examples}
\subsection{Model for  stock price with stochastic volatility}\label{fin-example}
We consider the stochastic volatility model for stock price suggested by Barndorff-Nielson and Shephard \cite{B-NS01}. Let $X_t$ denote the logarithm of stock price and $Y_t$ the stochastic volatility. 

\begin{equation*}
\begin{split}
dX_t&=(r-\frac1 2Y_t)dt+\sqrt{Y_t}dW_t\\
dY_t&=-\frac{Y_t}{\delta}dt+dZ^{1/\delta}_t,
\end{split}
\end{equation*}
where $W_t$ is a standard Brownian motion and $Z^{1/\delta}_t$ is an independent non-Gaussian L\'evy process with intensity $\frac{1}{\delta}\nu(dz)$; the parameter $0<\delta\ll 1$ denotes the mean-reversion time scale in stochastic volatility. The process $Z$ is often referred to as the {\it background driving L\'evy process }(BDLP). If we are interested in pricing options on the stock which are close to maturity, we will only be interested in small-time asymptotics of the model. We thus scale time by a parameter $0<\epsilon\ll 1$, to get
\begin{equation}\label{SVM}
\begin{split}
dX_{\ep,t}&=\epsilon(r-\frac1 2Y_{\ep,t})dt+\sqrt{\epsilon}\sqrt{Y_{\ep,t}}dW_t\\
dY_{\ep,t}&=-\frac{\ep}{\delta}Y_{\ep,t}dt+dZ^{1/\delta}_{\ep t},
\end{split}
\end{equation}
The multi scale structure comes from the fast mean reversion in stochastic volatility and the small time to maturity. We are interested in the situation where time to maturity ($\epsilon$) is small, but large compared to mean-reversion time ($\delta$) of stochastic volatility. The interesting regime as seen in \cite{FFK12} is when $\delta=\ep^2$. The generator of $(X_\ep, Y_\ep)$ is given by:
\begin{align*}
\L_\ep f(x,y)=&\ep\left((r-\frac1 2y)\p_xf(x,y)+\frac1 2 y \p^2_{xx}f(x,y)\right)\\
&+\frac{1}{\ep}\left( -y\p_yf(x,y)+\int \left(f(x,y+z)-f(x,y)\right)\nu(dz)\right),
\end{align*}
for $f\in C^2_b(\R^2)$.

For this example, since the coefficients are $x$-independent, the perturbed operator $\L_1^{x,p}$ is the same as $\L_1$, the generator of $Y$:  
\[\L_1 f(y)=-y f^\prime(y)+\int \left(f(y+z)-f(y)\right)\nu(dz), \quad \text{ for }f\in C^2_b(\R).\] We can obtain the limiting Hamiltonian $\ol{H}_0$ by solving the eigenvalue problem \eqref{EVP}. Here $V(y;x,p)\equiv V(y;p)=\frac1 2 y p^2$. $\ol{H}_0(p)$ is the eigenvalue $\lambda$ of the eigenvalue problem
\[-y f^\prime(y)+\int \left(f(y+z)-f(y)\right)\nu(dz)+\frac1 2 y p^2 f(y)=\lambda f(y).\]
Note that $f(y)=e^{\frac{p^2}{2}y}$ and $\lambda(p)=\int \left(e^{\frac{p^2}{2}z}-1\right) \nu(dz)$ satisfy the eigenvalue problem.  So $\ol{H}_0(p)=\lambda(p)=\int \left(e^{\frac{p^2}{2}z}-1\right) \nu(dz)$. 
In this example, in the absence of a Lyapunov function $\zeta$ satisfying Assumption \ref{assmpn:Lyap}, we give a slightly altered proof as follows. The following proof assumes $\overline{H}_0(p)$ is finite.

To verify Condition \ref{limsupH}, for $f\in D_+$ and $0<\theta<1$, we define $f_{0,\ep}:= f(x)+\ep ((1-\theta)g(x,y)+\theta \tilde\zeta(y))$, where $g(x,y):=\frac{1}{2}(f^\prime(x))^2 y$ (logarithm of the eigenfunction) and $\tilde\zeta(y):= C^2y$, $C:=\sup_x |f^\prime(x)|$. Then we get
\begin{align*}
H_\ep f_{0,\ep}(x,y)&\leq \frac{1}{2}y(f^\prime(x))^2+\ep((r-\frac{y}{2})f^\prime(x)+\frac{1}{2}yf^{\prime\prime}(x))+(1-\theta)e^{-g}\L_1^{f^\prime(x)}e^g+\theta e^{-\tilde\zeta}\L_1^{f^\prime(x)}e^{\tilde\zeta}\\
&=(1-\theta) \lambda(f^\prime(x))+\theta\l[-y(C^2-\frac{1}{2}(f^\prime(x))^2)+\int (e^{C^2z}-1)\nu(dz)\r]\\
&\qquad+\ep((r-\frac{y}{2})f^\prime(x)+\frac{1}{2}yf^{\prime\prime}(x)).
\end{align*}
Thus $H_\ep f_{0,\ep}$ satisfies Condition \ref{limsupH}.\ref{cond-a}. Condition \ref{limsupH}.\ref{cond-b} is immediate and 
\begin{align*}
\limsup_{\ep\to 0}H_\ep f_{0,\ep}&\leq \inf_{0<\theta<1}\l[(1-\theta) \lambda(f^\prime(x))+\theta \sup_{y}\l(-y(C^2-\frac{1}{2}(f^\prime(x))^2)+\int (e^{C^2z}-1)\nu(dz)\r)\r]\\
&\leq \limsup_{\theta\to 0}\l[(1-\theta) \lambda(f^\prime(x))+\theta \sup_{y}\l(-y(C^2-\frac{1}{2}(f^\prime(x))^2)+\int (e^{C^2z}-1)\nu(dz)\r)\r]\\
&=\lambda(f^\prime(x))=:\overline{H}_0(f^\prime(x)).
\end{align*}

Similarly, to verify Condition \ref{liminfH}, define $f_{1,\ep}:= f(x)+\ep ((1+\theta)g(x,y)-\theta \tilde\zeta(y))$.
It is unnecessary to verify any operator inequality as the limiting operators  $H_0$ and $H_1$ coincide and equal $\overline{H}_0$.

\begin{rem}\label{rem-lyap} Recall the definition of $\tilde{V}$ at the beginning of section \ref{sec:Examples}, $\tilde{V}^p(x,y):=V(y;x,p)+|b_0(x,y)|+\sigma^2(x,y)$.  In general, in case we have a solution to the eigenvalue problem defining the Hamiltonian $\overline H_0$, then the exact same proof as above using $f_{0,\ep}=f(x)+\ep((1-\theta)g(x,y)+\theta \tilde\zeta(y))$, with $g(x,y)$ the logarithm of the eigenfunction and $\tilde\zeta$ satisfying  the $\tilde{V}$-multiplicative ergodicity condition \[ e^{-\tilde\zeta}\L_1^{x,p}e^{\tilde\zeta}(y)\leq -cV^p(x,y)+d, \; \mbox{ for }c>1, d>0\] is enough to conclude our large deviation results  (provided $\tilde{V}$ has compact finite level sets, as it was above). 
\end{rem}

In Barndorff-Nielsen and Shephard \cite{B-NS01}, the BDLP, $Z$, is assumed to have only positive increments. A simple example of such a L\'evy process is a jump process taking finitely many jumps that is the L\'evy measure is 
$\nu(z_i)>0$  where $z_i>0$, $i=1,2,\hdots, k$.   We can then explicitly compute $\ol{H}_0(p)$ and its Legendre transform $\bar{L}(p)$. As seen in \cite{FFK12} (Lemma D.1 in \cite{FFK12}), since $\ol{H}_0(p)$ is not state dependent, we get the rate function to be $I(x,x_0,t)=t\bar{L}\left(\frac{x_0-x}{t}\right)$.
In finance, a common example is where $Z$ is a gamma process, in which case $\nu(dz)=\frac{a}{z}e^{-bz}dz$, $a,b>0$. Then 
\[\ol{H}_0(p)=\begin{cases}
a\ln \l(1+\frac{p^2}{2b-p^2}\r) & \text{ if }-\sqrt{2b}<p<\sqrt{2b}\\
\infty & \text{ if }p^2>2b,
\end{cases}
\] and the rate function is given by $I(x;x_0,t)=t\bar{L}\left(\frac{x_0-x}{t}\right)$, where
\[\bar{L}(q)=\begin{cases}
-a+\sqrt{a^2+2bq^2}-a\ln 2b+a\ln \l(\frac{-2a^2}{q^2}+\frac{2a}{q^2}\sqrt{{a^2}+2b{q^2}}\r) &\text{ if }q>0\\
0 & \text{ if }q=0\\
-a-\sqrt{a^2+2bq^2}-a\ln 2b+a\ln \l(\frac{-2a^2}{q^2}-\frac{2a}{q^2}\sqrt{{a^2}+2b{q^2}}\r) &\text{ if }q<0.
\end{cases}\]
This rate function then gives the asymptotic behavior of a European Call option on the stock. Let $K$ denote the strike price and $S_{\ep,t}=e^{X_{\ep,t}}$, then for $S_0=e^{x_0}<K$ (out-of-the-money call), 
\[\lim_{\ep\to0}\ep\log E\l[S_{\ep,t}-K\r]^+= - I(\log K;x_0,t),\] where maturity time $T=\ep t$.
This follows from Corollary 1.3 in \cite{FFF}.

\medskip
\subsection{Model for self-regulating protein production}\label{bio-example}

 The simplest model for translation of protein from DNA is the system below, with a gene that is  either in its ``on" state $G_1$, or in its ``off" state $G_0$, and in which the protein  activates the changes from ``off" to ``on" state: 
\begin{center}
\begin{tabular}{lrlllrlll} 
(1)&$G_0+P$&$\stackrel {\kappa'_{1}}{\rightharpoonup}$&$G_1+P$&
\qquad(3)&$G_1$&$\stackrel {\kappa'_{2}}{\rightharpoonup}$&$G_1+P$\\
(2)&$G_1+P$&$\stackrel {\kappa'_{-1}}{\rightharpoonup}$&$G_0+P$&
\qquad(4)&$P$&$\stackrel {\kappa'_{3}}{\rightharpoonup}$&$\emptyset$&\\
\end{tabular}
\end{center}

Suppose the amount of protein $P$ is of order $1/\ep$, whose rate of production $\kappa_2'=1/\ep\, \kappa_2$, while its rate of degradation $\kappa_3'=\kappa_3$; where $\kappa_2,\kappa_3$ are of $O(1)$. The amount of genes in the ``on"- and ``off"-state is  $\in\{0,1\}$, their total amount always equaling $1$, and suppose the rates of changes of the gene from the ``on"-state to the ``off"-state and back are very rapid due to its regulation by the large amounts of protein $\kappa_{1}'=\kappa_{1}, \kappa_{-1}'=\kappa_{-1}$, where $\kappa_{1},\kappa_{-1}$ are of $O(1)$.
 This system is characteristic of eukaryotes, where the gene switching noise dominates over the transcriptional and translational noise. 
 We can represent the changes in the system using the process $X_\ep$ for the count of protein molecules normalized by $\ep$, and $Y_\ep$ for the (unnormalized) count of ``on"-gene molecules.
A diffusion process is a good approximation for the evolution of $X_\ep$ as long as the count of proteins is not too small, that is, the unnormalized count is $\gg\ep$ and $X_\ep \sim O(1)$ (\cite{KKP12} gives a rigorous justification of diffusion approximations for Markov chain models that apply in stochastic reaction kinetics). This diffusion solves $dX_{\ep,t}=b(X_{\ep,t},Y_{\ep,t})dt+\sqrt{\ep}\sigma(X_{\ep,t},Y_{\ep,t}) dW_t$ with drift $b(x,y)=\kappa_2 y-\kappa_3 x$ (protein production has only two possible values: it will be 0 when $y=0$, or $\kappa_2$ when $y=1$), with diffusion coefficient $\sigma^2(x,y)=\kappa_2y+\kappa_3 x$, and initial value $X_{\ep,0}=x_0>0$. Changes in the amount of proteins due to other independent sources of noise, such as errors after cell splitting, can be modelled by an additional jump term for $X_\ep$ where the jump measure $\nu_1(dx)$ can be as simple as $\nu_1(z)=\frac 12\delta_{-1}(z)+\frac12\delta_{+1}(z)$, 
producing 
\[dX_{\ep,t}=(\kappa_2 Y_{\ep,t}-\kappa_3 X_{\ep,t})dt +\sqrt{\ep(\kappa_2Y_{\ep,t}+\kappa_3 X_{\ep,t})}dW_t+\ep \int \mathbf 1_{X_\ep>\ep}z\tilde N^{\frac{1}{\ep}}(dz,dt)\]

The amount of genes  $G_1$ in the ``on"-state is a rapidly fluctuating two-state Markov chain $Y$ on $\{0,1\}$ with rates $r_{0\to 1}(x)=\frac 1\ep \kappa_1x$ and $r_{1\to 0}(x)=\frac 1\ep \kappa_{-1}x$ that depend on the normalized amount of protein (note that the amount of genes  $G_0$ in the ``off"-state is $1-Y$). This chain is reversible, and for each $x>0$ it has a unique stationary distribution $\pi^x(1)=1-\pi^x(0)= \kappa_1 /(\kappa_1 +\kappa_{-1})$.

Signalling proteins such as morphogens have to be in the right range of concentrations to avoid triggering the expression of genes at the wrong times. The probabilities of their amounts being out of range are given by the Large Deviation Principle for $X_\ep$ as $\ep\to 0$, for which we need to obtain the solution to the eigenvalue problem for the operator $V(y;x,p)+{\cal L}^x$ where 
${\cal L}^x f(y)=r_{0\to 1}(x)\big(f(y+1)-f(y)\big)1_{y=0}+r_{1\to 0}(x)\big(f(y-1)-f(y)\big)1_{y=1}$.\\

In order to solve $(V(y;x,p)+{\cal L}^x)e^{u_1}=\lambda e^{u_1}$ for $\lambda$, let $e^{u_1(x,1)}=a_1(x), e^{u_1(x,0)}=a_0(x)$, for some $a_1,a_0$ strictly positive functions. Then
\begin{eqnarray*}
(\kappa_2-\kappa_3 x)pa_1(x)+(\kappa_2+\kappa_3 x)p^2a_1(x)+\frac12(e^p+e^{-p}-2)a_1(x)+\kappa_{-1}x(a_0(x)-a_1(x))=\lambda a_1(x)\\
-\kappa_3 xpa_0(x)+\kappa_3 xp^2a_0(x)+\frac12(e^p+e^{-p}-2)a_0(x)+\kappa_{1}x(a_1(x)-a_0(x))=\lambda a_0(x)\\
\end{eqnarray*}
equivalently, with $a(x)=a_1(x)/a_0(x)$,
\begin{equation*}
(\kappa_2-\kappa_3 x)p+(\kappa_2+\kappa_3 x)p^2+\kappa_{-1}x(\frac{1}{a(x)}-1)=-\kappa_3 xp+\kappa_3 xp^2+\kappa_{1}x(a(x)-1)
\end{equation*}
which, since $a(x)$ has to be positive, gives
\begin{equation*}
a(x)=\frac{-B+\sqrt{B^2-4AC}}{2A}, \quad A=\kappa_{1}x,\, B=- \kappa_2p-\kappa_2p^2+ (\kappa_{-1}-\kappa_{1})x,\, C=-\kappa_{-1}x
\end{equation*}
and consequently, using notation above,
 \begin{equation*}
\bar H_0(x,p)=-\kappa_3 xp+\kappa_3 xp^2+\kappa_{1}x(a(x)-1)+\frac12(e^p+e^{-p}-2)
 \end{equation*}
Note that when $\kappa_{-1}=\kappa_1$ then 
$$
a(x)=\frac{ \kappa_2p(1+p)+\sqrt{(\kappa_2p(1+p))^2+(2\kappa_{1}x)^2}}{2\kappa_{1}x}
$$
and 
$$
\bar H_0(x,p)=-\kappa_3p(1-p)x+\frac12\kappa_2p(1+p)+\frac12\sqrt{(\kappa_2p(1+p))^2+(2\kappa_{1}x)^2}-\kappa_{1}x+\frac12(e^p+e^{-p}-2).
$$
Note that $\bar H_0(x,p)$ is both convex in $p$ and continuous in $x$.

If one were to use an approximation of the evolution of the normalized protein amount $X_\ep$ by a piecewise deterministic process then (without additional noise) 
\[dX^{\textsc {PDMP}}_{\ep,t}=(\kappa_2 Y_{\ep,t}-\kappa_3 X_{\ep,t})dt\]
while $Y_\ep$ is the same fast Markov chain on $\{0,1\}$. In this case $V(y; x,p)=(\kappa_2-\kappa_3x)p$ and the Hamiltonian (when $\kappa_1=\kappa_{-1}$) becomes
\[\bar H_0^{\textsc {PDMP}}(x,p)=-\kappa_3px+\frac12\kappa_2p+\frac12\sqrt{(\kappa_2p)^2+(2\kappa_{1}x)^2}-\kappa_{1}x.\]
which is easy to compare to the Hamiltonian $\bar H_0$ of the diffusion process $X_\ep$ taking into account the small perturbative noise arising from randomness in the timing of  chemical reactions and from randomness in the outcomes of cell splitting.

\newpage
\appendix

\section{Appendix}

    \renewcommand{\thelemma}{\Alph{section}.\arabic{lemma}}
    \numberwithin{equation}{section}

\begin{lemma}\label{sec:appendix-phi-lsc}
Fix $x,p\in \R$ and let $\phi:\mathcal P(\R)\to \R$ be defined by $\phi(\mu)=\int V(y;x,p)\mu(dy)$. Then, $\phi$ is a lower semi-continuous (l.s.c.) function on $\mathcal P(\R)$. 
\end{lemma}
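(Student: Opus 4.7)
The plan is to reduce the statement to the standard fact that if $g:\R\to\R$ is bounded below and lower semi-continuous, then $\mu\mapsto\int g\,d\mu$ is lower semi-continuous on $\mathcal P(\R)$ in the weak topology. Granting this, it suffices to check that $V(\,\cdot\,;x,p)$ is bounded below and l.s.c.\ in $y$. Boundedness below is immediate from Assumption~\ref{assmpn:growth}, specifically the lower bound \eqref{V_bound}: $V(y;x,p)\ge K_{x,p}>-\infty$ for all $y\in\R$.

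For lower semi-continuity of $y\mapsto V(y;x,p)$, I would split $V$ into its three pieces. The term $b(x,y)p+\tfrac12\sigma^2(x,y)p^2$ is continuous in $y$ by the Lipschitz Assumption~\ref{assmpn:Lip}. The jump integral $I(y):=\int(e^{pk(x,y,z)}-1-pk(x,y,z))\nu_1(dz)$ requires more care. By convexity of $e^u-1-u$, the integrand is pointwise nonnegative. Given $y_n\to y$, Assumption~\ref{assmpn:Lip} yields $k(x,y_n,\cdot)\to k(x,y,\cdot)$ in $L^2(\nu_1)$, so along any subsequence we can extract a further subsequence $y_{n_j}$ with $k(x,y_{n_j},z)\to k(x,y,z)$ for $\nu_1$-a.e.\ $z$. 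Fatou's lemma, applicable because of nonnegativity, gives
\[
\liminf_j I(y_{n_j})\;\ge\;\int\!\bigl(e^{pk(x,y,z)}-1-pk(x,y,z)\bigr)\nu_1(dz)\;=\;I(y).
\]
Since this holds along every subsequence, $\liminf_n I(y_n)\ge I(y)$, i.e.\ $I$ is l.s.c. Adding to the continuous pieces yields lower semi-continuity of $V(\,\cdot\,;x,p)$.

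To pass from l.s.c.\ integrand to l.s.c.\ integral, write $\tilde V(y):=V(y;x,p)-K_{x,p}\ge0$, which is nonnegative and l.s.c. Using the standard Moreau--Yosida regularization
\[
\tilde V_n(y):=\inf_{z\in\R}\bigl\{\tilde V(z)+n|y-z|\bigr\}\wedge n,
\]
each $\tilde V_n$ is bounded, continuous (in fact Lipschitz), nonnegative, and $\tilde V_n\uparrow\tilde V$ pointwise. For each fixed $n$, $\mu\mapsto\int\tilde V_n\,d\mu$ is continuous under weak convergence, and by monotone convergence $\int\tilde V\,d\mu=\sup_n\int\tilde V_n\,d\mu$. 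A supremum of continuous functions is l.s.c., so $\mu\mapsto\int\tilde V\,d\mu$ is l.s.c., and adding back the constant $K_{x,p}$ yields l.s.c.\ of $\phi$.

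The only nontrivial step is the lower semi-continuity of the jump integral $I(y)$; once Fatou applies via the nonnegativity coming from convexity of $e^u-1-u$, the rest of the argument is a standard weak-topology computation.
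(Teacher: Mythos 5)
Your proof is correct, but it takes a genuinely different route from the paper. The paper argues directly with the truncations $V_M:=V\wedge M$ (for $M\geq \inf_y V$): since $V_M$ is bounded, weak convergence gives $\int V_M\,d\mu_n\to\int V_M\,d\mu$, and then monotone convergence in $M$ together with an exchange of $\sup_M$ and $\liminf_n$ yields $\phi(\mu)\leq\liminf_n\phi(\mu_n)$. Note that this step of the paper tacitly needs $V_M$ to be a legitimate test function for weak convergence, i.e.\ it uses continuity of $y\mapsto V(y;x,p)$, which is not verified there. You instead (i) only establish \emph{lower} semicontinuity of $y\mapsto V(y;x,p)$ — handling the jump integral by passing to an $L^2(\nu_1)$-convergent, hence $\nu_1$-a.e.\ convergent subsequence of $k(x,y_n,\cdot)$ and applying Fatou to the nonnegative integrand $e^{u}-1-u$ — and then (ii) invoke, and prove via Moreau--Yosida regularization, the general fact that $\mu\mapsto\int g\,d\mu$ is weakly l.s.c.\ whenever $g$ is l.s.c.\ and bounded below. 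Your route is slightly longer but more self-contained and in fact needs less regularity of $V$ in $y$ than the paper's argument implicitly assumes; the paper's truncation argument is shorter once continuity of $V$ is granted. Both proofs use the lower bound \eqref{V_bound} in the same essential way (to justify the monotone approximation from below), and both are valid.
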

\begin{proof}
For the rest of the proof, we will write $V(y)$ for $V(y;x,p)$.
Let $V_M:=V\cdot 1_{V\leq M}+M\cdot 1_{V\geq M}$, for $M\geq \inf_yV(y)$. 
To show that $\phi(\mu)$ is l.s.c, it is sufficient to show that if $\mu_n \longrightarrow \mu$ weakly, then $\phi(\mu)\leq \liminf_{n\to\infty}\phi(\mu_n)$. Assume $\mu_n\longrightarrow \mu$ weakly. Then 
\[\int V_M d\mu=\lim_{n\to\infty}\int V_Md\mu_n,\]
by definition of weak convergence of measures, since $V_M$ is a bounded function.  By the monotone convergence theorem we get
\begin{align*}
\phi(\mu)= \int V d\mu&=\lim_{M\to\infty}\int V_M d\mu\\
&=\lim_{M\to\infty}\lim_{n\to\infty}\int V_Md\mu_n\\
&=\sup_M \lim_{n\to\infty}\int V_M d\mu_n\\
&\leq \liminf_{n\to\infty}\sup_M\int V_M d\mu_n\\
&=\liminf_{n\to\infty}\int V d\mu_n  \intertext{by Monotone convergence theorem}
&=\liminf_{n\to\infty} \phi(\mu_n)
\end{align*}

\end{proof}

\begin{lemma}\label{sec:appendix-comparison}
Let $u_1$ be a bounded, upper semicontinuous (u.s.c.), viscosity subsolution and $u_2$ a bounded, lower semicontinuous (l.s.c.), viscosity  supersolution of $\p_tu(t,x)=\ol H_0(x,\p_x u(t,x))$ respectively. If  $u_1(0,\cdot)\le u_2(0,\cdot)$, and $\bar H_0$ is uniformly continuous on compact sets, then $u_1\le u_2$ on $[0,T]\times \mathbb R$ for any $T>0$.
\end{lemma}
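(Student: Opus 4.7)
The plan is to apply the doubling-of-variables technique in the form of Theorem 3.7 and Remark 3.8 of Chapter II in \cite{BD97}, adapted to our time-dependent first-order setting. I would argue by contradiction: assume $M := \sup_{[0,T]\times\R}(u_1-u_2) > 0$. To control the unbounded spatial domain and the terminal time $T$, introduce the auxiliary functional
\[
\Phi(t,s,x,y) := u_1(t,x)-u_2(s,y)-\frac{(x-y)^2+(t-s)^2}{2\alpha}-\delta(x^2+y^2)-\frac{\eta}{T-t}
\]
for small parameters $\alpha,\delta,\eta>0$. Boundedness of $u_1,u_2$ together with the $\delta$-coercivity in $(x,y)$ and the $\eta$-barrier at $t=T$ guarantee that $\Phi$ attains a finite maximum at some $(\bar t,\bar s,\bar x,\bar y)\in[0,T)^2\times\R^2$, and for $\delta,\eta$ sufficiently small (depending on $M$) this maximum is strictly positive. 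The hypothesis $u_1(0,\cdot)\le u_2(0,\cdot)$, combined with upper/lower semicontinuity, then forces $\bar t,\bar s > 0$.

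Next I would invoke the standard penalization estimates: as $\alpha\to 0$ with $\delta,\eta$ fixed, $(\bar x,\bar y)$ stays in a compact set $K_\delta\subset\R^2$, and both $(\bar t-\bar s)^2/\alpha\to 0$ and $|\bar x-\bar y|^2/\alpha\to 0$. Applying the viscosity subsolution property of $u_1$ at $(\bar t,\bar x)$ and the supersolution property of $u_2$ at $(\bar s,\bar y)$ with the corresponding halves of $\Phi$ as test functions, and writing $p_\alpha:=(\bar x-\bar y)/\alpha$, subtraction yields
\[
\frac{\eta}{T^2}\;\le\;\bar H_0(\bar x,\, p_\alpha+2\delta\bar x)-\bar H_0(\bar y,\, p_\alpha-2\delta\bar y).
\]
The goal is then to drive the right-hand side to zero along $\alpha\to 0$ (then $\delta\to 0$, with $\eta$ fixed), contradicting $\eta/T^2>0$ and forcing $M\le 0$, which delivers $u_1\le u_2$.

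The main obstacle is controlling $|p_\alpha|$, which is not a priori bounded as $\alpha\to 0$: boundedness of $u_1,u_2$ alone yields only $|\bar x-\bar y|\le O(\sqrt\alpha)$, hence $|p_\alpha|\le O(1/\sqrt\alpha)$. The BD97 argument closes this gap by using the penalization estimate $|\bar x-\bar y|^2/\alpha\to 0$, which gives $|\bar x-\bar y|\,|p_\alpha|\to 0$, so that the combined increment $|\bar x-\bar y|+2\delta(|\bar x|+|\bar y|)$ vanishes even as $|p_\alpha|$ grows. Along a subsequence the subsolution/supersolution inequalities together with boundedness of the $u_i$ further confine $p_\alpha$ to a fixed compact set $K'\subset\R$ depending only on $\delta$ and $\|u_i\|_\infty$, so that uniform continuity of $\bar H_0$ on $K_\delta\times K'$ supplies a modulus $\omega$ with
\[
\bar H_0(\bar x,p_\alpha+2\delta\bar x)-\bar H_0(\bar y,p_\alpha-2\delta\bar y)\;\le\;\omega\bigl(|\bar x-\bar y|+2\delta(|\bar x|+|\bar y|)\bigr).
\]
This tends to $0$ first as $\alpha\to 0$ (by the penalization estimate) and then as $\delta\to 0$ (since $K_\delta$ can be chosen to absorb the $2\delta|\bar x|$ terms), producing the contradiction $\eta/T^2\le 0$. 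Hence $u_1\le u_2$ on $[0,T]\times\R$, which is the comparison principle.
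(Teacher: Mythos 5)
Your overall strategy is the same as the paper's: argue by contradiction and run the Bardi--Capuzzo Dolcetta doubling-of-variables argument. The difference is in the choice of penalization, and that difference is where your argument breaks. With the quadratic doubling term $\frac{(x-y)^2+(t-s)^2}{2\alpha}$ the momentum $p_\alpha=(\bar x-\bar y)/\alpha$ is a priori only $O(1/\sqrt{\alpha})$, and the hypothesis ``$\overline{H}_0$ uniformly continuous on compact sets'' (which is nothing more than continuity of $\overline{H}_0$) gives no control whatsoever when the $p$-argument leaves every compact set. You acknowledge this, but neither of your two remedies closes the gap. The estimate $|\bar x-\bar y|^2/\alpha\to 0$, i.e.\ $|\bar x-\bar y|\,|p_\alpha|\to 0$, is only useful under a structure condition of BD97 type, $|\overline{H}_0(x,p)-\overline{H}_0(y,p)|\le\omega\bigl(|x-y|(1+|p|)\bigr)$; Theorem 3.7 of \cite{BD97} assumes such a condition, and it is not among the hypotheses of this lemma. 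And the claim that ``the subsolution/supersolution inequalities together with boundedness of the $u_i$ confine $p_\alpha$ to a fixed compact set $K'$'' is unsupported: no coercivity of $\overline{H}_0$ in $p$ is assumed (it may for instance be bounded), so those inequalities only bound values of $\overline{H}_0$ at the points in question and say nothing about $|p_\alpha|$. As written, the step ``drive the right-hand side to zero'' is therefore not justified.

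The paper's proof is engineered precisely to avoid this issue by a different choice of test function: the doubling term is $\frac12\ln\bigl(1+\frac{|x-y|^2+|t-s|^2}{\ep}\bigr)$ and the localization is $\beta\bigl(\ln(1+x^2)+t^2\bigr)$, whose spatial gradients, $\frac{(x-y)/\ep}{1+(|x-y|^2+|t-s|^2)/\ep}$ and $\frac{2\beta x}{1+x^2}$, are bounded. Consequently the two arguments fed into $\overline{H}_0$ in \eqref{app:sub}--\eqref{app:super} share the same (bounded) momentum part and differ only by $|\xep-\yep|$ in the state slot and by $O(\beta)$ in the momentum slot, so they remain in a fixed compact set for fixed $\beta$ and uniform continuity on compacts applies directly as $\ep\to 0$, then $\beta\to 0$. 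If you wish to keep the quadratic ansatz, you must add an assumption that restores control of the momentum (a BD97 structure condition, global uniform continuity of $\overline{H}_0(\cdot,p)$ uniformly in $p$, or Lipschitz bounds on $u_1,u_2$ that force $|p_\alpha|$ bounded); otherwise switch to the paper's bounded-gradient penalization. The remaining ingredients of your outline (the barrier $\eta/(T-t)$, excluding $\bar t=0$ or $\bar s=0$ via $u_1(0,\cdot)\le u_2(0,\cdot)$, the final limits in $\alpha,\delta,\eta$) are standard and essentially parallel the paper's use of the $-At$ and $t^2,s^2$ terms.
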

\begin{proof} 
Suppose \begin{equation}\label{app:contradiction}
\sup_{t\le T,x}\{u_1(t,x)-u_2(t,x)\}>A\geq \delta>0.\end{equation}
Let $g(t,x)=\ln(1+x^2)+t^2$. 
Define\[\psi(t,x,s,y)=u_1(t,x)-u_2(s,y)-\frac{1}{2} \ln\l(1+\frac{|x-y|^2+|t-s|^2}{\ep}\r)-\beta\l(g(t,x)+g(s,y)\r) -At.\]
Fix $\beta>0$ and let $(\tep,\xep,\sep,\yep)$ denote the point of maximum of $\psi$ in $([0,T]\times\R\times[0,T]\times \R)$ for $\ep>0$. 
Since $u_1, u_2$ are bounded, for fixed $\beta>0$, there exists an $R_\beta>0$ such that $|\xep|,|\yep|\leq R_\beta$ for all $\ep>0$. 

Using \[\psi(\tep,\xep,\tep,\xep)+\psi(\sep,\yep,\sep,\yep)\leq 2 \psi(\tep,\xep,\sep,\yep),\]
we get 
\begin{align*}\frac{1}{2} \ln\l(1+\frac{|\xep-\yep|^2+|\tep-\sep|^2}{\ep}\r)&\leq A(\sep-\tep)+u_1(\tep, \xep)-u_1(\sep,\yep)+u_2(\tep, \xep)-u_2(\sep,\yep)\\
&\leq 2AT+2||u_1||+2|u_2||=:C<\infty,\end{align*} 
which gives us \[|\xep-\yep|^2+|\tep-\sep|^2\leq \ep e^{2C}.\]
Therefore $|\xep-\yep|, |\sep-\tep|\to 0$ as $\ep\to 0$. 

Let \[\phi_1(t,x):=u_2(\sep,\yep)+\frac{1}{2} \ln\l(1+\frac{|x-\yep|^2+|t-\sep|^2}{\ep}\r)+\beta\l(g(t,x)+g(\sep,\yep)\r) +At\] and 
\[\phi_2(s,y):= u_1(\tep,\xep)-\frac{1}{2} \ln\l(1+\frac{|\xep-y|^2+|\tep-s|^2}{\ep}\r)-\beta\l(g(\tep,\xep)+g(s,y)\r) -A\tep.\]
Then $(\tep,\xep)$ is a point of maximum of $u_1(t,x)-\phi_1(t,x)$ and $(\sep,\yep)$ is a point of minimum of $u_2(s,y)-\phi_2(s,y)$. Since $u_1$ and $u_2$ are sub and super solutions respectively, by the definition of sub and super solutions we get
\begin{equation}\label{app:sub}
\frac{\frac{\tep-\sep}{\ep}}{1+\frac{|\xep-\yep|^2+|\tep-\sep|^2}{\ep}}+A+2\beta\tep \leq \ol{H}_0\l(\xep, \frac{\frac{\xep-\yep}{\ep}}{1+\frac{|\xep-\yep|^2+|\tep-\sep|^2}{\ep}}+\frac{2\beta\xep}{1+\xep^2}\r),
\end{equation}
and
\begin{equation}\label{app:super}
\frac{\frac{\tep-\sep}{\ep}}{1+\frac{|\xep-\yep|^2+|\tep-\sep|^2}{\ep}}-2\beta\sep\geq \ol{H}_0\l(\yep, \frac{\frac{\xep-\yep}{\ep}}{1+\frac{|\xep-\yep|^2+|\tep-\sep|^2}{\ep}}-\frac{2\beta \yep}{1+\yep^2}\r).
\end{equation}
Subtracting \eqref{app:super} from \eqref{app:sub}, we get 
\begin{equation}\label{app:barH0}
A+2\beta(\tep+\sep)\leq \ol{H}_0\l(\xep, \frac{\frac{\xep-\yep}{\ep}}{1+\frac{|\xep-\yep|^2+|\tep-\sep|^2}{\ep}}+\frac{2\beta\xep}{1+\xep^2}\r)-\ol{H}_0\l(\yep, \frac{\frac{\xep-\yep}{\ep}}{1+\frac{|\xep-\yep|^2+|\tep-\sep|^2}{\ep}}-\frac{2\beta \yep}{1+\yep^2}\r).
\end{equation}

\medskip
Since $\ol{H}_0(\cdot,\cdot)$ is uniformly continuous over compact sets, and since $|\xep-\yep|\to0$ as \mbox{$\ep\to 0$} (for fixed $\beta$),  the right-hand side of the above inequality goes to $0$ as $\ep\to 0$ and $\beta\to 0$ (note that the terms $\frac{\frac{\xep-\yep}{\ep}}{1+\frac{|\xep-\yep|^2+|\tep-\sep|^2}{\ep}}$,  $\frac{2\xep}{1+\xep^2}$ and $\frac{2\yep}{1+\yep^2}$ are bounded and that $|\xep|,\yep|\leq R_\beta$ for each $\beta$). 

Taking $\ep\to 0$ and then $\beta\to 0$, we get
\[A\leq 0, \] which contradicts \eqref{app:contradiction}. Therefore we must have
\[\sup_{t,x}\{u_1(t,x)-u_2(t,x)\}\leq 0\] which gives us $u_1\leq u_2$.\\

\end{proof}


\end{document}